\newcommand{\fun}{\mathbb{F}_1}
\renewcommand{\Re}{\mathop{\mathrm{Re}}}
\renewcommand{\Im}{\mathop{\mathrm{Im}}}
\newcommand{\ilim}[1]{\mathop{\varinjlim}\limits_{#1}}
\newcommand{\sts}[1]{\mathcal{O}_{#1}}
\newcommand{\CR}{\mathfrak{CRing}}
\newcommand{\Set}{\mathfrak{Set}}
\newcommand{\mr}{\mathfrak{MR}}
\newcommand{\Mo}{\mathfrak{M}_0}
\newcommand{\Sch}{\textbf{Sch}}
\newcommand{\MSch}{\textbf{MSch}}
\newcommand{\mcX}{\mathcal{X}}
\newcommand{\Zf}[1]{\underline{#1}_\mathbb{Z}}
\newcommand{\Mf}[1]{\underline{#1}_{\Mo}}
\newcommand{\Zs}[1]{#1_\mathbb{Z}}
\newcommand{\Ms}[1]{#1_{\Mo}}
\newcommand{\Hom}{\mathop{\mathrm{Hom}}\nolimits}
\newcommand{\abszeta}[1]{\zeta^{\mathrm{abs}}_{#1}}
\newcommand{\abschi}{\chi_{\mathrm{abs}}}
\newcommand{\Spec}{\mathop{\mathrm{Spec}}}
\newcommand{\MSpec}{\mathop{\mathrm{spec}}}
\newcommand{\rank}{\mathop{\mathrm{rank}}}
\DeclareRobustCommand{\reldim}{\qopname\relax o{rel.dim}}
\newcommand{\Arg}{\mathop{\mathrm{Arg}}}
\newtheorem{theorem}{Theorem}[section]
\newtheorem{prop}[theorem]{Proposition}
\newtheorem{lemma}[theorem]{Lemma}
\newtheorem{cor}[theorem]{Corollary}
\newtheorem{conj}[theorem]{Conjecture}
\theoremstyle{definition}
\newtheorem{definition}[theorem]{Definition}
\newtheorem{eg}[theorem]{Example}
\newtheorem{remark}[theorem]{Remark}
\title[Absolute Euler Product]{The absolute Euler product representation of the absolute zeta function for a torsion free Noetherian $\fun$-scheme}
\date{}
\author{Takuki Tomita}
\thanks{This research was supported in part by KAKENHI 18H05233. This research is also supported by the KLL 2020 Ph.D. Program Research Grant.}
\address{Department of Mathematics, Faculty of Science and Technology, Keio University, 3-14-1 Hiyoshi, Kouhoku-ku, Yokohama 223-8522, Japan}
\email{takuki@keio.jp}
\subjclass[2020]{11M41 (Primary), 14G10, 11M06 (Secondary)}
\keywords{absolute zeta functions, $\fun$-schemes, absolute Euler product, absolute geometry, the field with one element}
\begin{document}
\maketitle

\begin{abstract}
The absolute zeta function for a scheme $X$ of finite type over $\mathbb{Z}$ satisfying a certain condition is defined as the limit as $p\to 1$ of the zeta function of $X\otimes\mathbb{F}_p$.
In 2016, after calculating absolute zeta functions for a few specific schemes, Kurokawa suggested that an absolute zeta function for a general scheme of finite type over $\mathbb{Z}$ should have an infinite product structure which he called the absolute Euler product.
In this article, formulating his suggestion using a torsion free Noetherian $\fun$-scheme defined by Connes and Consani, we give a proof of his suggestion. Moreover, we show that each factor of the absolute Euler product is derived from the counting function of the $\fun$-scheme.
\end{abstract}

\setcounter{tocdepth}{1}
\tableofcontents

%
%
%
%
%
\section{Introduction}
%
%
%
%
%

The purpose of this article is to give the absolute Euler product representation of the absolute zeta function associated with a torsion free Noetherian $\fun$-scheme.

The hypothetical concept of $\fun$ was introduced originally by Tits~\cite{tits1957analogues} and later by Deninger and Kurokawa as an approach to the Riemann Hypothesis. They suggested that if there were to exist a hypothetical base field $\fun$ of $\mathbb{Z}$ satisfying $\mathbb{Z}\otimes_{\fun}\mathbb{Z}\not\cong\mathbb{Z}$ and we could treat $\Spec\mathbb{Z}$ as a curve over $\fun$, then the Riemann Hypothesis would follow in a similar way as that of Deligne's proof of the Weil conjecture~\cite{manin1995lectures}. This idea has stimulated constructions of candidates of schemes over $\fun$ (\textit{$\fun$-schemes}) and studies on their properties.

While there is yet no definitive theory of $\fun$-schemes, currently there are various candidates of the definition of an $\fun$-scheme. Among various studies of $\fun$-schemes, Deitmar~\cite{deitmar2005} defined an $\fun$-scheme as a monoid scheme, gluing spectra of commutative monoids just like the construction of a scheme.
Connes and Consani~\cite{CC2010} extended this idea and defined an $\fun$-scheme as the functor which has both information of a monoid scheme and a usual scheme over $\mathbb{Z}$. In this article, we adopt this definition of an $\fun$-scheme.

In parallel with the studies of $\fun$-schemes, the zeta function associated with an $\fun$-scheme, which is called the \textit{absolute zeta function}, has been developed. We define the \textit{counting function} of $X$ to be a function $N_X(t)$ which satisfies $N_X(p^m)=\#X_p(\mathbb{F}_{p^m})$ for any prime number $p$ and $m\in\mathbb{N}$, where $X_p:=X\otimes\mathbb{F}_p$. For a scheme $X$ of finite type over $\mathbb{Z}$ (an \textit{arithmetic scheme}) whose counting function is a \textbf{polynomial}, Soul\'e~\cite{soule2004} defined the absolute zeta function for $X$ treated as an $\fun$-scheme as
\[\abszeta{X}(s)=\lim_{p\to 1}(p-1)^{N_X(1)}Z(X,p^{-s})\quad (s\in\mathbb{R}),\]
where $Z(X,p^{-s})$ is the zeta function of $X_p$ defined as
\[Z(X,p^{-s}):=\exp\left(\sum_{m=1}^\infty \frac{N_X(p^m)}{m}p^{-sm}\right).\]
It is known that $\abszeta{X}(s)$ in this case is a rational function and can be continued meromorphically to the whole complex plane $\mathbb{C}$. 
Connes and Consani~\cite{CC2010} extended this definition to the absolute zeta function $\zeta_{\mcX/\fun}(s)$ for a torsion free Noetherian $\fun$-scheme $\mcX$.

In \cite{kurokawa2016azf-eng}, Kurokawa calculated absolute zeta functions for some specific arithmetic schemes according to Soul\'e's definition~\cite{kurokawa2016azf-eng}*{Exercise 7.2} and suggested that the absolute zeta function of a general arithmetic scheme has an infinite product structure as follows, which he called the \textit{absolute Euler product}. Note that the absolute zeta function of a general arithmetic scheme has not been defined yet, but we assume its existence in the following.  In this article, we denote $\mathbb{N}:=\{1,2,\ldots\}$ and $\mathbb{N}_0:=\mathbb{N}\cup\{0\}$.

\begin{conj}[Kurokawa's suggestion (= Conjecture \ref{conj: Kurokawa's suggestion})~{\cite{kurokawa2016azf-eng}*{\S 7.3}}]
Let $X$ be an arithmetic scheme.
Then, there should exist an infinite product expansion of $\abszeta{X}(s)$ called the {\rm absolute Euler product} of the form
\begin{equation}\label{eq: AEP-conj-intro}
    \abszeta{X}(s)=\left(\frac1s\right)^{\abschi(X)}\prod_{n=1}^\infty \left(1-\left(\frac1s\right)^n\right)^{-\kappa(n,X)},
\end{equation}
where $\kappa(n,X)\in\mathbb{Z}$ for any $n\in\mathbb{N}$, $\abschi(X):=N_X(1)$ is the absolute Euler characteristic\footnote{The reason why this is called the Euler characteristic is because this coincides with $\sum (-1)^i b_i$ ($b_i$ is the $i^\text{th}$ Betti number), which is given by formally substituting $m=0$ in $N_X(p^m)=\#X(\mathbb{F}_{p^m})=\sum (-1)^i (\alpha_{i,1}^m+\cdots+\alpha_{i,b_i}^m)$ (the Lefschetz trace formula).}, and $N_X(t)$ is the counting function of $X$. Moreover, the infinite product (\ref{eq: AEP-conj-intro}) converges absolutely for $\Re(s)>\dim X$.
\end{conj}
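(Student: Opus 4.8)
The plan is to reduce the statement to a power-series identity in the variable $t=1/s$ and then to invoke the integrality of Witt (necklace) coordinates. Recall first that for a torsion free Noetherian $\fun$-scheme $\mcX$ the counting function is a polynomial $N(q)=\sum_{i=0}^{D}a_iq^i$ with $a_i\in\mathbb{Z}$ and $D=\dim\mcX$, and that, unwinding Soul\'e's limit, $\abszeta{\mcX}(s)=\lim_{p\to1}(p-1)^{N(1)}\prod_{i=0}^{D}(1-p^{\,i-s})^{-a_i}=\prod_{i=0}^{D}(s-i)^{-a_i}$, since $\tfrac{p-1}{1-p^{\,i-s}}\to\tfrac1{s-i}$ as $p\to1$. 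Factoring one copy of $s$ out of each term and collecting the exponents gives
\[
\abszeta{\mcX}(s)=\Bigl(\tfrac1s\Bigr)^{\sum_i a_i}\prod_{i=0}^{D}\Bigl(1-\tfrac is\Bigr)^{-a_i}=\Bigl(\tfrac1s\Bigr)^{\abschi(\mcX)}F(1/s),\qquad F(t):=\prod_{i=0}^{D}(1-it)^{-a_i},
\]
with $\abschi(\mcX)=\sum_i a_i=N(1)$. So it remains to factor $F(t)$ suitably.

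The heart of the argument is that $F(t)$ is a quotient of polynomials in $\mathbb{Z}[t]$, each with constant term $1$, hence $F(t)\in 1+t\mathbb{Z}[[t]]$. By the standard identification of $1+t\mathbb{Z}[[t]]$ with the big Witt ring (equivalently, by a triangular recursion on coefficients), every $G\in 1+t\mathbb{Z}[[t]]$ has a unique expansion $G(t)=\prod_{n\ge1}(1-t^n)^{-\kappa_n}$ with all $\kappa_n\in\mathbb{Z}$; applied to $F$ this yields integers $\kappa(n,\mcX):=\kappa_n$ together with the asserted identity
\[
\abszeta{\mcX}(s)=\Bigl(\tfrac1s\Bigr)^{\abschi(\mcX)}\prod_{n\ge1}\Bigl(1-\bigl(\tfrac1s\bigr)^n\Bigr)^{-\kappa(n,\mcX)}.
\]
To exhibit the exponents as being \emph{derived from the counting function}, I would compare logarithms: $\log F(t)=-\sum_i a_i\log(1-it)=\sum_{k\ge1}\tfrac{M_k}{k}t^k$ with $M_k:=\sum_{i=0}^{D}a_i i^k=\bigl((q\tfrac{d}{dq})^kN\bigr)(1)$, whereas $-\sum_n\kappa_n\log(1-t^n)=\sum_{k\ge1}\tfrac1k\bigl(\sum_{d\mid k}d\kappa_d\bigr)t^k$; matching coefficients gives $M_k=\sum_{d\mid k}d\,\kappa(d,\mcX)$, and M\"obius inversion yields
\[
\kappa(n,\mcX)=\frac1n\sum_{d\mid n}\mu(n/d)\,M_d=\frac1n\sum_{d\mid n}\mu(n/d)\sum_{i=0}^{D}a_i\,i^{\,d}.
\]
(This re-proves integrality independently: writing $M_d=P_d-Q_d$ with $P_d,Q_d$ power sums of non-negative integers, each of $\tfrac1n\sum_{d\mid n}\mu(n/d)P_d$, $\tfrac1n\sum_{d\mid n}\mu(n/d)Q_d$ is an integer because necklace polynomials take integer values.)

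For convergence I would estimate $\kappa(n,\mcX)$ crudely. The case $D=0$ is trivial, so assume $D\ge1$ and put $C:=\sum_i|a_i|$. Since $0\le i\le D$, we get $|M_d|\le CD^d$, and hence
\[
|\kappa(n,\mcX)|\le\frac1n\sum_{d\mid n}|M_d|\le\frac Cn\bigl(D^n+nD^{n/2}\bigr)\le 2C\,D^n .
\]
Thus $\sum_{n\ge1}|\kappa(n,\mcX)|\,|s|^{-n}\le 2C\sum_{n\ge1}(D/|s|)^n<\infty$ whenever $|s|>D$, uniformly on compact subsets, so the infinite product converges absolutely and locally uniformly on $\{\,|s|>D\,\}$; and $\Re(s)>D\ge0$ forces $|s|\ge\Re(s)>D$, so this contains the half-plane $\Re(s)>\dim\mcX$, which finishes the proof. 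The main obstacle is the integrality of the exponents $\kappa(n,\mcX)$: all the content is in the observation that $\prod_i(1-it)^{-a_i}\in 1+t\mathbb{Z}[[t]]$ — equivalently, that the power sums $M_d$ satisfy the Dwork/Witt congruences — after which the product expansion and the convergence bound are routine.
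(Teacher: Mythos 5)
Your proposal is correct in substance but takes a genuinely different route from the paper. The paper proves Theorem~\ref{result: AEP for Noetherian} by starting from Connes--Consani's finite product formula (Theorem~\ref{thm: property of AZF for Noether}), $\zeta_{\mcX/\fun}(s)=\prod_{x\in\Ms{X}}\bigl(1-\tfrac1s\bigr)^{\otimes(-r(x))}$, expanding each Kurokawa tensor power into linear factors $\bigl(1-\tfrac{r(x)-j}{s}\bigr)^{\pm\binom{r(x)}{j}}$, and then applying to each linear factor the identity $\prod_{n\ge1}(1-u^n)^{\kappa_a(n)}=1-au$ of Lemma~\ref{lem: core of AEP-formula}; integrality of $\kappa_a(n)$ there is proved by a $p$-adic argument via the congruence $y^{p^{e+1}}\equiv y^{p^e}\pmod{p^{e+1}}$. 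You instead work directly from Soul\'e's formula $\abszeta{X}(s)=\prod_i(s-i)^{-a_i}$ with $N(t)=\sum_i a_it^i$, observe that $F(t)=\prod_i(1-it)^{-a_i}\in 1+t\mathbb{Z}[[t]]$, and invoke the Witt/necklace decomposition of $1+t\mathbb{Z}[[t]]$ to get integer exponents $\kappa(n,X)$, recovering the explicit formula by M\"obius inversion on $\log F$. The two decompositions are the same in the end and yield the same $\kappa(n,X)$ (your $\frac1n\sum_{d\mid n}\mu(n/d)\sum_i a_ii^d$ is exactly the linear M\"obius transform $M_n(N)$ of Corollary~\ref{cor: AEP-linear Mobius transform}), but the paper's route directly exposes the geometric meaning of $\kappa(n,X)$ as a sum over points $x\in\Ms{X}$, while yours reaches the polynomial-coefficient form more cheaply and with a more elementary integrality argument.

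Two remarks on rigor. First, your convergence estimate $\sum_n|\kappa(n,X)|\,|s|^{-n}<\infty$ does not by itself give absolute convergence of the infinite product; you need $\sum_n\bigl|(1-u^n)^{-\kappa(n,X)}-1\bigr|<\infty$ (equivalently, absolute convergence of $\sum_n\kappa(n,X)\log(1-u^n)$). This follows quickly from your bound, since $|\log(1-u^n)|\le\tfrac{|u|^n}{1-|u|^n}\le\tfrac{|u|^n}{1-|u|}$ for $|u|<1$, but it should be said; the paper's Lemma~\ref{lem: core of AEP-convergence} is considerably more careful here and also establishes \emph{divergence} for $|u|\ge 1/a$, which pins the exact radius and is not addressed in your argument (though the conjecture only asks for convergence). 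Second, you identify $D$ with $\dim\mcX$ throughout; what your argument really produces is $\deg N=\reldim\Zs{X}/\mathbb{Z}$ (Theorem~\ref{thm: counting-dim}), which is one less than $\dim\Zs{X}$ and gives a region of convergence $|s|>\deg N$ strictly containing the half-plane $\Re(s)>\dim X$ predicted by the conjecture. Finally, as the paper itself stresses, the statement as a conjecture about general arithmetic schemes cannot be proved because $\abszeta{X}(s)$ is not defined there; you correctly restrict at the outset to torsion free Noetherian $\fun$-schemes, which is exactly the reformulation the paper adopts.
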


In this article, we formulate Kurokawa's suggestion for a certain class of $\fun$-schemes and give its proof. Moreover, we explicitly give the integer $\kappa(n,X)$ using the points of a monoid scheme associated with an $\fun$-scheme. The following is the main theorem of this article.

\begin{theorem}[= Theorem \ref{result: AEP for Noetherian}]\label{thm: main-thm-intro}
Let $\mcX$ be a torsion free Noetherian $\fun$-scheme and let $\Ms{X}$ (resp. $\Zs{X}$) be the geometric realisation of $\mcX|_{\Mo}$ (resp. $\mcX|_{\CR}$). Note that the definitions of $\Mo$ and $\CR$ will be given in the beginning of \S 2 and $\Ms{X}$ (resp. $\Zs{X}$) is a monoid scheme (resp. a scheme over $\mathbb{Z}$). Then we have
\[\zeta_{\mcX/\fun}(s)=\left(\frac1s\right)^{N_\mcX(1)}\prod_{n=1}^\infty \left(1-\left(\frac1s\right)^n\right)^{-\kappa(n,\Ms{X})}.\]
Here, $N_\mcX(t)\in\mathbb{Z}[t]$ is the counting function of $\mcX$ and
\[\kappa(n,\Ms{X}):=\sum_{x\in\Ms{X}}\sum_{j=0}^{r(x)}(-1)^{r(x)-j}\binom{r(x)}{j}\kappa_j(n), \quad \kappa_j(n):=\frac1n\sum_{m|n}\mu\left(\frac nm\right)j^m,\]
where $r(x):=\rank\sts{\Ms{X},x}^\times$ for $x\in\Ms{X}$ and $\mu$ is the M\"obius function.
Moreover, if $\Zs{X}$ is of finite type over $\mathbb{Z}$, the region of absolute convergence of this absolute Euler product is $\{s\in\mathbb{C}\mid |s|>\reldim\Zs{X}/\mathbb{Z}\}$, where $\reldim\Zs{X}/\mathbb{Z}$ is the relative dimension of $\Zs{X}$ over $\Spec\mathbb{Z}$.
\end{theorem}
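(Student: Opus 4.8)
The plan is to reduce the assertion to Soul\'e's closed formula for the absolute zeta function together with a cyclotomic-type identity for the necklace polynomials $\kappa_j(n)$. I would use two inputs recorded earlier: since $\mcX$ is torsion free Noetherian, its counting function satisfies $N(t)\in\mathbb{Z}[t]$ and $N(t)=\sum_{x\in\Ms{X}}(t-1)^{r(x)}$ (a finite sum, as $\Ms{X}$ has finitely many points), and, writing $N(t)=\sum_{i\ge 0}a_it^i$, one has $\zeta_{\mcX/\fun}(s)=\prod_{i}(s-i)^{-a_i}$. The first step is to record, as a lemma, the \emph{cyclotomic identity}: for every integer $j\ge 0$,
\[
\frac{1}{1-ju}=\prod_{n=1}^\infty\left(1-u^n\right)^{-\kappa_j(n)}\qquad\text{in }\mathbb{Z}[[u]].
\]
This follows by taking logarithms, using $-\log(1-u^n)=\sum_{k\ge 1}u^{nk}/k$, collecting the coefficient of $u^N$ on the right as $\frac1N\sum_{d\mid N}d\,\kappa_j(d)$, and evaluating $\sum_{d\mid N}d\,\kappa_j(d)=\sum_{m\mid N}j^m\sum_{m\mid d\mid N}\mu(d/m)=j^N$ by M\"obius inversion, while the left side has logarithm $\sum_{N\ge 1}(ju)^N/N$.

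Next I would assemble the product. Setting $u=1/s$ and $s-i=s(1-iu)$, the formula for $\zeta_{\mcX/\fun}(s)$ becomes
\[
\zeta_{\mcX/\fun}(s)=\prod_{i}s^{-a_i}(1-iu)^{-a_i}=\left(\frac1s\right)^{N(1)}\prod_{i}(1-iu)^{-a_i},
\]
because $\sum_i a_i=N(1)$ and the $i=0$ factor is trivial. Applying the cyclotomic identity to each factor and interchanging the (finite) product over $i$ with the product over $n$ gives
\[
\zeta_{\mcX/\fun}(s)=\left(\frac1s\right)^{N(1)}\prod_{n=1}^\infty\left(1-\left(\frac1s\right)^n\right)^{-\sum_{i}a_i\kappa_i(n)},
\]
so it remains to identify $\sum_i a_i\kappa_i(n)$ with $\kappa(n,\Ms{X})$. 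Expanding $(t-1)^{r(x)}$ gives $a_i=\sum_{x\in\Ms{X}}(-1)^{r(x)-i}\binom{r(x)}{i}$ with the convention $\binom{r}{i}=0$ for $i>r$, and substituting this and interchanging the two finite sums produces exactly $\sum_{x\in\Ms{X}}\sum_{j=0}^{r(x)}(-1)^{r(x)-j}\binom{r(x)}{j}\kappa_j(n)=\kappa(n,\Ms{X})$; in particular $\kappa(n,\Ms{X})\in\mathbb{Z}$, as asserted.

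For the convergence statement I would fix $\Zs{X}$ of finite type over $\mathbb{Z}$ and put $D=\reldim\Zs{X}/\mathbb{Z}$. Using the (previously recorded) equality $D=\deg N(t)$ we have $a_i=0$ for $i>D$, so, since $0\le\kappa_j(n)\le j^n$,
\[
\bigl|\kappa(n,\Ms{X})\bigr|=\left|\sum_{i=0}^{D}a_i\kappa_i(n)\right|\le\left(\sum_{i}|a_i|\right)D^n.
\]
For $|s|>D$ we then have $|u|=1/|s|<1/D\le 1$, hence $\sum_{n\ge 1}\bigl|\kappa(n,\Ms{X})\bigr|\,\bigl|\log(1-u^n)\bigr|\le\bigl(\sum_i|a_i|\bigr)(1-|u|)^{-1}\sum_{n\ge 1}(D|u|)^n<\infty$, which yields absolute convergence of the absolute Euler product for $|s|>D$ (when $D=0$ every $\kappa(n,\Ms{X})$ vanishes and the statement is vacuous). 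The manipulations above are routine bookkeeping; the hypotheses enter only through the structural inputs $N(t)\in\mathbb{Z}[t]$, $N(t)=\sum_x(t-1)^{r(x)}$ and $\deg N(t)=\reldim\Zs{X}/\mathbb{Z}$, so I would expect the real obstacle to lie in securing those facts (that is where "torsion free Noetherian" and "finite type over $\mathbb{Z}$" are genuinely used) rather than in the formal computation.
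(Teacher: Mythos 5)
Your derivation of the infinite product is, up to bookkeeping, the paper's own argument: you start from Soul\'e's closed form $\zeta_{\mcX/\fun}(s)=\prod_i(s-i)^{-a_i}$ (Proposition \ref{prop: AZF of Noet F1-scheme}) rather than from the Kurokawa tensor-product expression (Theorem \ref{thm: property of AZF for Noether}), but both reduce to $N(t)=\sum_{x\in\Ms{X}}(t-1)^{r(x)}$ and the same necklace identity $1-au=\prod_n(1-u^n)^{\kappa_a(n)}$ (Lemma \ref{lem: core of AEP-formula}), and the exponent bookkeeping is identical. Your convergence proof, on the other hand, is a genuinely different and cleaner route. The paper invokes Lemma \ref{lem: core of AEP-convergence}, which proves absolute convergence (and divergence outside the disk, which the theorem does not need) of each individual factor $\prod_n(1-u^n)^{\kappa_a(n)}$ via a long estimate using Stirling's formula and a careful bound on $\kappa_a(n)$, then takes a finite product. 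You instead bound the combined exponent globally: $0\le\kappa_j(n)\le j^n\le D^n$ for $j\le D=\deg N=\reldim\Zs{X}/\mathbb{Z}$ gives $|\kappa(n,\Ms{X})|\le\bigl(\sum_i|a_i|\bigr)D^n$, and then $\sum_n|\kappa(n,\Ms{X})|\,|\log(1-u^n)|$ is dominated by a geometric series for $|u|<1/D$. This yields exactly the required absolute convergence with much less work, at the price of not recovering the sharpness of the radius that the paper's lemma also provides. One small caveat: you remark ``in particular $\kappa(n,\Ms{X})\in\mathbb{Z}$,'' but your formal power-series derivation of the cyclotomic identity does not by itself establish $\kappa_j(n)\in\mathbb{Z}$; the paper proves that separately inside Lemma \ref{lem: core of AEP-formula} (by a $p$-adic argument, or one can cite the interpretation of $\kappa_j(n)$ as the number of aperiodic necklaces), so a one-line justification is needed if you wish to assert integrality.
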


In fact, each factor of the absolute Euler product is derived from the counting function. 
We state in Corollary \ref{cor: AEP-linear Mobius transform} that $\kappa(n,\Ms{X})$ is given as the image of the counting function $N_\mcX(t)$ with respect to the homomorphism $M_n\colon\mathbb{Z}[t]\to\sum_{a\in\mathbb{Z}}\mathbb{Z}\kappa_a(n)$.

The content of this article is as follows.
In \S\ref{section: F1 and counting function}, we review the definition of an $\fun$-scheme defined by Connes and Consani and properties of a torsion free Noetherian $\fun$-scheme.
In \S\ref{section: AZF-AEP}, we review the definition and properties of the absolute zeta function for a torsion free Noetherian $\fun$-scheme. Then, we state the main theorem as we mentioned in Theorem \ref{thm: main-thm-intro}.
Then, we prove the main theorem in \S\ref{section: proof of AEP}, after showing some key lemmas.
Lastly, we give some examples of the absolute Euler products of the absolute zeta functions for some torsion free Noetherian $\fun$-schemes in \S\ref{section: examples}.

%
%
%
%
%
\section{$\fun$-schemes and their counting functions}\label{section: F1 and counting function}
%
%
%
%
%

In this section, we review the definition of $\fun$-schemes constructed by Connes and Consani~\cite{CC2010}, and show some properties of the counting function of a torsion free Noetherian $\fun$-scheme. In this article, we take a \textit{monoid} to be a commutative multiplicative monoid with the identity $1$ and the absorbing element $0$ which maps any element to 0 by multiplication, and we denote the category of sets, that of commutative rings, and that of monoids by $\Set$, $\CR$, and $\Mo$, respectively.

\subsection{$\fun$-schemes}\label{section: F1}

First of all, we review the definition of $\fun$-schemes defined by Connes and Consani~\cite{CC2010}.

\begin{definition}[{\cite{CC2010}*{\S 4.1}}]\label{def: MR}
Let $\beta$ be the functor $\mathbb{Z}[\cdot]\colon\Mo\to\CR$ and $\beta^\ast$ be the forgetful functor $U\colon\CR\to\Mo$.
We define the category $\mr:=\Mo\cup_{\beta,\beta^\ast}\CR$ as follows;
\begin{itemize}
\item objects: $\mathrm{Ob}(\mr):=\mathrm{Ob}(\Mo)\sqcup\mathrm{Ob}(\CR)$;
\item for any $M,N\in\Mo$ and $R,S\in\CR$,\\
\quad $\Hom_{\mr}(M,N):=\Hom_{\Mo}(M,N)$,\quad $\Hom_{\mr}(R,S):=\Hom_{\CR}(R,S)$,\\
\quad $\Hom_{\mr}(M,R):=\Hom_{\CR}(\beta(M),R)\cong\Hom_{\Mo}(M,\beta^\ast(R))$,\\
\quad $\Hom_{\mr}(R,M):=\emptyset$.
\end{itemize}
\end{definition}

An $\fun$-scheme is a functor $\mr\to\Set$ which combines information of an $\Mo$-scheme, a $\mathbb{Z}$-scheme and a natural transformation which binds them.
In short, an $\Mo$-scheme (resp. a $\mathbb{Z}$-scheme) is a functor $\Mo\to\Set$ (resp. $\CR\to\Set$) which admits an open covering by representable subfunctors.
For further details, see \cite{CC2010}*{\S 3}.

\begin{definition}[{\cite{CC2010}*{Definition 4.7}}]
A functor $\mcX\colon\mr\to\Set$ is an \textit{$\fun$-scheme} if it satisfies the following conditions;
\begin{itemize}
\item the restriction $\mcX|_{\Mo}$ is an $\Mo$-scheme;
\item the restriction $\mcX|_{\CR}$ is a $\mathbb{Z}$-scheme;
\item for any field $K$, the evaluation $e_K\colon(\mcX|_{\Mo}\circ\beta^\ast)(K)\to\mcX|_{\CR}(K)$ is bijective.
\end{itemize}
\end{definition}

\begin{remark}[{\cite{CC2010}*{Proposition 3.17}}]
For any $\Mo$-scheme $\mathcal{F}\colon\Mo\to\Set$, there exists the unique monoid scheme $\Ms{X}$ (up to isomorphism) such that 
\[\mathcal{F}=\Hom_{\MSch}(\MSpec(\cdot),\Ms{X})=:\Mf{X},\]
where $\MSch$ is the category of monoid schemes. We call $\Ms{X}$ the \textit{geometric realisation} of $\mathcal{F}$.
Similarly, for any $\mathbb{Z}$-scheme $\mathcal{G}\colon\CR\to\Set$, there exists the unique scheme $\Zs{X}$ over $\mathbb{Z}$  (up to isomorphism) such that $\mathcal{G}=\Hom_{\Sch}(\Spec(\cdot),\Zs{X})=:\Zf{X}$, where $\Sch$ is the category of schemes over $\mathbb{Z}$. We also call $\Zs{X}$ the \textit{geometric realisation} of $\mathcal{G}$.
\end{remark}

\subsection{Counting functions of torsion free Noetherian $\fun$-schemes}\label{section: counting function of F1}

Next, we show some properties of the counting function of a torsion free Noetherian $\fun$-scheme. At first, we review the definition of a torsion free Noetherian $\fun$-scheme and the important property that its counting function is a polynomial.

\begin{definition}[{\cite{CC2010}*{Definition 4.12}}]
An $\Mo$-scheme $\mathcal{F}$ is \textit{Noetherian} if there exist $d\in\mathbb{N}$ and Noetherian monoids $M_1,\ldots,M_d$ satisfying
\[\mathcal{F}=\bigcup_{i=1}^d\Hom_{\Mo}(M_i,\cdot)\]
by open subfunctors $\Hom_{\Mo}(M_i,\cdot)$.
Here, $M_i$ is Noetherian if and only if $M_i$ is finitely generated, or equivalently $\mathbb{Z}[M_i]$ is a Noetherian ring.
A \textit{Noetherian} $\mathbb{Z}$-scheme is also defined similarly.

An $\fun$-scheme $\mcX$ is \textit{Noetherian} if $\mcX|_{\Mo}$ is a Noetherian $\Mo$-scheme and $\mcX|_{\CR}$ is a Noetherian $\mathbb{Z}$-scheme.
\end{definition}

\begin{definition}[{\cite{CC2010}*{\S 4.4}}]
Let $X$ be the geometric realisation of $\underline{X}\colon\Mo\to\Set$. $X$ is \textit{torsion free} if the group $\sts{X,x}^\times$ is torsion free for any $x\in X$.
Moreover, we also call an $\fun$-scheme $\mcX$ to be \textit{torsion free} if the geometric realisation of $\mcX|_{\Mo}$ is torsion free.
\end{definition}

The following theorem is the important property of a torsion free Noetherian $\fun$-scheme to define its absolute zeta function defined by Soul\'e.

\begin{theorem}[{\cite{CC2010}*{Theorem 4.13}}]\label{thm: property of Noether F1}
Let $\mcX$ be a torsion free Noetherian $\fun$-scheme, and let $\Ms{X}$ (resp. $\Zs{X}$) be the geometric realisation of $\mcX|_{\Mo}$ (resp. $\mcX|_{\CR}$).
Also, let $C_n:=\langle s\mid s^n=1\rangle$ be the cyclic group of order $n$ and $\fun[C_n]:=C_n\cup\{0\}$ be a monoid.
\begin{enumerate}
\item There exists $N_\mcX(t)\in\mathbb{Z}[t]$ such that for any $n\in\mathbb{N}$ \[\#\Mf{X}(\fun[C_n])=N_\mcX(n+1).\]
\item For any $q=p^m$, $\#\Zf{X}(\mathbb{F}_q)=N_\mcX(q)$.
\end{enumerate}
We call this polynomial $N_\mcX(t)$ the {\rm counting function} of $\mcX$.
\end{theorem}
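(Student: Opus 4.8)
The plan is to reduce both statements to a single computation — the count of the $\fun[G]$-points of the monoid scheme $\Ms{X}$ for $G$ a finite cyclic group — and then to evaluate it explicitly. First I would establish a description of $\fun[G]$-points valid for an \emph{arbitrary} monoid scheme. Since $\MSpec\fun[G]$ is a single point $\{(0)\}$ (Example~\ref{eg: monoidalization}) and $\fun[G]$ is a local monoid with maximal ideal $\{0\}=\fun[G]\setminus\fun[G]^\times$, a morphism $\MSpec\fun[G]\to\Ms{X}$ of monoid schemes is, just as for $\Spec$ of a local ring mapping into a scheme, the datum of a point $x\in\Ms{X}$ together with a \emph{local} homomorphism $\sts{\Ms{X},x}\to\fun[G]$ on stalks (reducing to Deitmar's~\cite{deitmar2005} description of morphisms into an affine open $\MSpec M\ni x$, using $\sts{\MSpec M,\mathfrak p}\cong M_{\mathfrak p}$). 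Any such local homomorphism sends $\mathfrak m_x:=\sts{\Ms{X},x}\setminus\sts{\Ms{X},x}^\times$ into $\{0\}$ and restricts on units to a group homomorphism $\sts{\Ms{X},x}^\times\to G$; conversely, since $\sts{\Ms{X},x}=\sts{\Ms{X},x}^\times\sqcup\mathfrak m_x$ and $\mathfrak m_x$ is an ideal, every group homomorphism $\sts{\Ms{X},x}^\times\to G$ extends uniquely to a local homomorphism by sending $\mathfrak m_x$ to $0$. Hence
\[
\Mf{X}(\fun[G])=\Hom_{\MSch}(\MSpec\fun[G],\Ms{X})=\bigsqcup_{x\in\Ms{X}}\Hom_{\Ab}\!\bigl(\sts{\Ms{X},x}^\times,G\bigr).
\]

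Next I would feed in the hypotheses on $\mcX$. Since $\mcX$ is Noetherian, $\Ms{X}$ is covered by finitely many $\MSpec M_i$ with each $M_i$ finitely generated, and a finitely generated monoid has only finitely many prime ideals, so $\Ms{X}$ is a finite set; moreover each stalk $\sts{\Ms{X},x}\cong(M_i)_{\mathfrak p}$ is finitely generated, hence its unit group $\sts{\Ms{X},x}^\times$ is a finitely generated abelian group. Since $\mcX$ is torsion free, $\sts{\Ms{X},x}^\times\cong\mathbb Z^{r(x)}$ with $r(x):=\rank\sts{\Ms{X},x}^\times<\infty$, so $\#\Hom_{\Ab}(\sts{\Ms{X},x}^\times,C_n)=n^{r(x)}$ and therefore $\#\Mf{X}(\fun[C_n])=\sum_{x\in\Ms{X}}n^{r(x)}$. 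Setting
\[
N(t):=\sum_{x\in\Ms{X}}(t-1)^{r(x)}\in\mathbb Z[t]
\]
(a genuine polynomial, the sum being finite) gives $N(n+1)=\#\Mf{X}(\fun[C_n])$, which is statement (1).

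For statement (2), I would fix $q=p^m$ and invoke the defining axiom of an $\fun$-scheme for the field $K=\mathbb F_q$: the evaluation $e_{\mathbb F_q}\colon(\mcX|_{\Mo}\circ\beta^\ast)(\mathbb F_q)\to\mcX|_{\CR}(\mathbb F_q)=\Zf{X}(\mathbb F_q)$ is bijective. The monoid $\beta^\ast(\mathbb F_q)$ is the multiplicative monoid $\mathbb F_q^\times\cup\{0\}=\fun[\mathbb F_q^\times]=\fun[C_{q-1}]$, since $\mathbb F_q^\times$ is cyclic of order $q-1$; hence $\#\Zf{X}(\mathbb F_q)=\#\Mf{X}(\fun[C_{q-1}])=\sum_{x\in\Ms{X}}(q-1)^{r(x)}=N(q)$ by statement (1).

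I expect the main obstacle to be the first step: making the $\fun[G]$-point decomposition rigorous, i.e.\ carefully unwinding the sheaf-theoretic definition of a morphism of monoid schemes and the locality of the induced map on stalks, together with the structural inputs that a finitely generated monoid has finite spectrum and finitely generated unit group. Once these are in hand, the remaining counting and the deduction of (2) from (1) are routine.
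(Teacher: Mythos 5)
The paper does not prove this theorem: it quotes it from Connes--Consani (\cite{CC2010}*{Theorem 4.13}), so there is no in-paper argument to compare against. Your reconstruction is correct and follows their line of reasoning: the decomposition $\Mf{X}(\fun[G])=\bigsqcup_{x\in\Ms{X}}\Hom_{\Ab}(\sts{\Ms{X},x}^\times,G)$ that you derive from first principles is precisely \cite{CC2010}*{Proposition 3.22}, the Noetherian/torsion-free hypotheses enter exactly where you put them (finite underlying space, $\sts{\Ms{X},x}^\times\cong\mathbb{Z}^{r(x)}$), and the deduction of (2) via $\beta^\ast(\mathbb{F}_q)=\fun[C_{q-1}]$ together with the bijectivity of $e_{\mathbb{F}_q}$ is theirs as well. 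One caution worth flagging: the paper's recalled definition of a morphism of monoidal spaces omits the locality condition on stalk maps that Deitmar's original definition imposes, and your affine-adjunction step (correctly) relies on that locality; likewise the claim that a finitely generated monoid has finitely generated unit group, which you use implicitly, deserves a one-line justification (any generator dividing a unit is itself a unit, so $M^\times$ is generated by the unit generators).
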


\begin{remark}
$\fun[C_n]$ in Theorem \ref{thm: property of Noether F1} (1) is called the $n^{\text{th}}$ extension $\mathbb{F}_{1^n}$ of $\fun$. In parallel with (2), (1) means that $\#\Mf{X}(\mathbb{F}_{1^n})=N_\mcX(n+1)$.
\end{remark}

\begin{remark}[{\cite{CC2010}*{Theorem 4.13}}]\label{rem: counting function of Noet F1}
It is easy to check that the counting function $N_\mcX(t)$ in Theorem \ref{thm: property of Noether F1} is given by
\[N_\mcX(t):=\sum_{x\in\Ms{X}} (t-1)^{r(x)}=\sum_{x\in\Ms{X}}\sum_{j=0}^{r(x)}(-1)^{r(x)-j}\binom{r(x)}{j}t^j,\]
where $r(x):=\rank\sts{\Ms{X},x}^\times$ for any $x\in\Ms{X}$.
\end{remark}

\begin{cor}
Let $\mcX$ be a torsion free Noetherian $\fun$-scheme, and let $\Ms{X}$ be the geometric realisation of $\mcX|_{\Mo}$.
Then we have $\#\Ms{X}<\infty$.
\end{cor}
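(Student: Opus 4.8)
The plan is to reduce the assertion to the statement that $\MSpec M$ is a finite set for every finitely generated monoid $M$, and then to prove that statement by a short argument on the prime ideals of $M$. First I would unwind the hypothesis: since $\mcX$ is Noetherian, $\mcX|_{\Mo}$ is a Noetherian $\Mo$-scheme, so by definition there are finitely many Noetherian---hence finitely generated---monoids $M_1,\dots,M_d$ with $\mcX|_{\Mo}=\bigcup_{i=1}^{d}\Hom_{\Mo}(M_i,\cdot)$ as a union of open subfunctors. Passing to geometric realisations, each $\Hom_{\Mo}(M_i,\cdot)=\Hom_{\MSch}(\MSpec(\cdot),\MSpec M_i)$ is the functor of points of the affine open $\MSpec M_i$ of $\Ms{X}$, and these affine opens cover $\Ms{X}$; in particular the underlying set of $\Ms{X}$ is the union of the underlying sets of the $\MSpec M_i$. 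Hence $\#\Ms{X}\le\sum_{i=1}^{d}\#\MSpec M_i$, and it suffices to show $\#\MSpec M<\infty$ whenever $M$ is finitely generated.

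For the affine case I would fix a finite generating set $G\subseteq M$ and consider the map $\MSpec M\to\mathcal{P}(G)$ sending a prime ideal $\mathfrak{p}$ to $S_{\mathfrak{p}}\cap G$, where $S_{\mathfrak{p}}:=M\setminus\mathfrak{p}$ is its complement. The main step is to show this map is injective, which comes down to recovering $S_{\mathfrak{p}}$ from $S_{\mathfrak{p}}\cap G$ as the set of all finite products (including the empty product $1$) of elements of $S_{\mathfrak{p}}\cap G$. One inclusion is immediate: $1\in S_{\mathfrak{p}}$ since $\mathfrak{p}\subsetneq M$, and $S_{\mathfrak{p}}$ is closed under multiplication since $\mathfrak{p}$ is prime. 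For the reverse inclusion, take $m\in S_{\mathfrak{p}}$; as ideals of a monoid contain $0$, we have $0\in\mathfrak{p}$, so $m\neq 0$ and $m$ is a (possibly empty) product of elements of $G\setminus\{0\}$; and because $\mathfrak{p}$ is an ideal, if any factor of such a product lay in $\mathfrak{p}$ then $m$ would too, so every factor lies in $S_{\mathfrak{p}}\cap G$. Thus $\mathfrak{p}=M\setminus S_{\mathfrak{p}}$ is determined by the subset $S_{\mathfrak{p}}\cap G$ of the finite set $G$, giving $\#\MSpec M\le 2^{\#G}<\infty$ and finishing the proof.

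The step that needs the most care is this injectivity claim, and the subtle point there is the role of the absorbing element $0$: the argument collapses unless one insists that ideals of a monoid be nonempty (equivalently, contain $0$), since otherwise $\emptyset$ would be a spurious prime ideal whose complement $M$ is not the set of products of its nonzero generators. This convention is already implicit in the examples of \S\ref{section: F1 and counting function} (for instance $\MSpec\fun[G]=\{(0)\}$ and $\textbf{A}^{1}=\{(0),(t_i)\}$), and I would recall it explicitly before running the argument; as a consistency check, the bijection above gives $\#\MSpec\textbf{A}^{r}=2^{r}$. Alternatively, one could extract the corollary from Remark~\ref{rem: counting function of Noet F1}: grouping the terms of $N(t)=\sum_{x\in\Ms{X}}(t-1)^{r(x)}$ according to the value of $r(x)$ and using the linear independence of the polynomials $(t-1)^{d}$, the fact that $N(t)\in\mathbb{Z}[t]$ forces only finitely many $x$ to occur; but this route presupposes the polynomiality in Theorem~\ref{thm: property of Noether F1}, so the self-contained argument through $\MSpec M$ is preferable.
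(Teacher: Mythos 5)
Your argument is correct, and it is a genuinely different and more self-contained route than the paper's. The paper's proof is a one-liner: Remark~\ref{rem: counting function of Noet F1} gives $N(t)=\sum_{x\in\Ms{X}}(t-1)^{r(x)}$, so $N(2)=\#\Ms{X}$, and $N(t)\in\mathbb{Z}[t]$ by Theorem~\ref{thm: property of Noether F1}. That is short but not self-contained: the polynomiality of $N$ --- and hence the very finiteness being proved --- is imported as a black box from Connes--Consani's Theorem~4.13, which moreover invokes the torsion-free hypothesis. You instead prove directly that $\MSpec M$ is finite for every finitely generated monoid $M$, via the injection $\mathfrak{p}\mapsto (M\setminus\mathfrak{p})\cap G$ into the power set of a finite generating set $G$, and then use only the Noetherian hypothesis to cover $\Ms{X}$ by finitely many such affines. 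This buys you an elementary argument that does not depend on the counting-function machinery, and in fact a slightly stronger statement, since torsion-freeness is never used. Your injectivity step is sound, and the subtlety you flag about requiring ideals to be nonempty (equivalently, to contain $0$) is exactly the right one: under the paper's literal definition the empty set would formally be a prime ideal, and the reconstruction of $M\setminus\mathfrak{p}$ from its trace on $G$ would fail for it; the examples such as $\MSpec\fun[G]=\{(0)\}$ confirm the intended convention excludes $\emptyset$. You also correctly observe that your ``alternative route'' through the counting function is, modulo packaging, the paper's own proof.
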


\begin{proof}
By Remark \ref{rem: counting function of Noet F1}, we have $N_\mcX(2)=\#\Ms{X}<\infty$.
\end{proof}

The counting function of a torsion free Noetherian $\fun$-scheme has the following property. We use this property later when we display the region of absolute convergence of the absolute Euler product using the relative dimension of the scheme associated with the $\fun$-scheme.
\begin{theorem}\label{thm: counting-dim}
Let $\mcX$ be a torsion free Noetherian $\fun$-scheme, and $N_\mcX(t)$ be its counting function.
Let $\Zs{X}$ be the geometric realisation of $\mcX|_{\CR}$. Assume that $\Zs{X}$ is of finite type over $\mathbb{Z}$.
Then, $N_\mcX(t)\in\mathbb{Z}[t]$ and
\[\deg N_\mcX = \reldim\Zs{X}/\mathbb{Z}.\]
\end{theorem}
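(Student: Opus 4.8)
The plan is to reduce the claim to a point count over each finite field. First, $N(t)\in\mathbb{Z}[t]$ is immediate from Remark~\ref{rem: counting function of Noet F1}: there $N(t)=\sum_{x\in\Ms{X}}(t-1)^{r(x)}$, a \emph{finite} sum (as $\#\Ms{X}<\infty$) of monic polynomials in $\mathbb{Z}[t]$, and since their leading terms cannot cancel one gets
\[
\deg N=\max_{x\in\Ms{X}}r(x),
\]
the leading coefficient being $\#\{x\in\Ms{X}\mid r(x)=\deg N\}$. (If $\Ms{X}=\emptyset$ then $N=0$, and by Theorem~\ref{thm: property of Noether F1}(2) the finite-type $\mathbb{Z}$-scheme $\Zs{X}$ has no point over any finite field, hence $\Zs{X}=\emptyset$ and the statement is vacuous; so assume $\Ms{X}\ne\emptyset$.) It therefore remains to prove $\deg N=\reldim\Zs{X}/\mathbb{Z}$. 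Recalling that, for the finite-type $\mathbb{Z}$-scheme $\Zs{X}$, one has $\reldim\Zs{X}/\mathbb{Z}=\sup_p\dim(\Zs{X}\otimes\mathbb{F}_p)$ — the generic fibre $\Zs{X}\otimes\mathbb{Q}$ contributing no more, by upper semicontinuity of fibre dimension — it suffices to show $\dim(\Zs{X}\otimes\mathbb{F}_p)=\deg N$ for every prime $p$.

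So fix $p$, put $Y:=\Zs{X}\otimes\mathbb{F}_p$ and $d:=\dim Y$. By Theorem~\ref{thm: property of Noether F1}(2), $\#Y(\mathbb{F}_{p^m})=\#\Zf{X}(\mathbb{F}_{p^m})=N(p^m)$ for every $m$, and $Y\ne\emptyset$ since $N(p^m)\ge\#\Ms{X}\ge 1$. I would bracket $\#Y(\mathbb{F}_{p^m})$ between two powers of $p^m$. For the upper bound, cover $Y$ by finitely many affine opens, realise each as a closed subscheme of an affine space over $\mathbb{F}_p$, use Noether normalisation to obtain a finite surjection onto $\mathbb{A}^{d'}_{\mathbb{F}_p}$ with $d'\le d$, and sum the counts to get $\#Y(\mathbb{F}_{p^m})\le C\,p^{md}$ with $C$ independent of $m$. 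For the lower bound, take a $d$-dimensional irreducible component of $Y$, pass to a geometrically irreducible component of its base change to $\overline{\mathbb{F}}_p$ (defined over $\mathbb{F}_{p^{k}}$ for some $k$), and apply the Lang--Weil estimate to get $\#Y(\mathbb{F}_{p^m})\ge\tfrac12\,p^{md}$ for all sufficiently large $m$ with $k\mid m$. On the other hand, $N(p^m)$ is a polynomial in $p^m$ of degree $\deg N$ with positive leading coefficient, so $c_1 p^{m\deg N}\le N(p^m)\le c_2 p^{m\deg N}$ for all large $m$, with $0<c_1\le c_2$. The upper bound (valid for all $m$) then forces $\deg N\le d$, and the lower bound (valid along $k\mid m$) forces $d\le\deg N$; hence $\deg N=d=\dim(\Zs{X}\otimes\mathbb{F}_p)$.

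Since this holds for every $p$, we get $\reldim\Zs{X}/\mathbb{Z}=\sup_p\dim(\Zs{X}\otimes\mathbb{F}_p)=\deg N$, as desired (and in particular $\dim(\Zs{X}\otimes\mathbb{F}_p)$ is then independent of $p$). The hard part will be the lower bound for $\#Y(\mathbb{F}_{p^m})$: a top-dimensional component of $Y$ need not contribute of order $p^{md}$ rational points over a fixed small field, because its geometric irreducible components may be Galois-conjugate, and one recovers the correct order of magnitude only after a suitable finite base extension — this is exactly what Lang--Weil (or a hands-on B\'ezout-type count for the extremal stratum) supplies. Everything else is formal or a routine point-count estimate.
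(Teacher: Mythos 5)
Your proof is correct, and it is in fact more careful than the paper's own argument. For the inequality $\deg N \le \reldim\Zs{X}/\mathbb{Z}$, the paper fixes a prime $p$, asserts that the fibre of $\Zs{X}$ over $\mathbb{F}_p$ has dimension $d := \reldim\Zs{X}/\mathbb{Z}$ (this uses, implicitly, flatness of $\Zs{X}/\mathbb{Z}$: locally $\Zs{X}$ is $\Spec\mathbb{Z}[M]$, which is free over $\mathbb{Z}$, so all fibre dimensions agree), and then invokes the uniform estimate of Lemma~\ref{lem: order of rational points} to get $N(q) = \#\Zf{X}(\mathbb{F}_q) = O(q^d)$, hence $\deg N \le d$. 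Your Noether-normalisation count gives the same bound and is equivalent in substance, just done by hand rather than by citing Poonen. The real difference is the reverse inequality: the paper jumps from the $O(q^d)$ upper bound directly to ``Since $N(t)$ is a polynomial, we have $\deg N = d$,'' but an upper bound alone only yields $\deg N \le d$. One also needs a lower bound $N(q) \gg q^d$, and this is exactly what your Lang--Weil step supplies, by taking a top-dimensional geometrically irreducible component over a suitable $\mathbb{F}_{p^k}$ and restricting to $m$ with $k\mid m$; together with the positive leading coefficient of $N$, that forces $d \le \deg N$. So your proposal closes a step that the paper's written proof leaves implicit or unaddressed, and you also handle two small points the paper glosses over: the degenerate case $\Ms{X}=\emptyset$, and the justification (via flatness) that the relevant comparison happens at the finite primes rather than the generic fibre.
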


To prove this theorem, we use the following lemma proved by Poonen.

\begin{lemma}[{\cite{poonen2017rational}*{Theorem 7.7.1 (i)}}]\label{lem: order of rational points}
Let $\pi\colon X\to Y$ be a morphism between schemes of finite type over $\mathbb{Z}$. Let $q$ be a prime power (i.e. $q=p^m$ for a prime number $p$), let $y\in Y(\mathbb{F}_q)$, let $X_y:=X\times_Y\Spec\kappa(y)$, and let $d=\dim X_y$.
Then, we have \[\#X_y(\mathbb{F}_q)=O(q^d).\]
\end{lemma}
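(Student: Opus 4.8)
The plan is to reduce the statement to a uniform point-counting bound for closed subschemes of affine space defined by equations of bounded degree, and then to prove that bound by induction on the ambient dimension. First note that, although $\pi\colon X\to Y$ is only assumed of finite type over $\mathbb{Z}$, once $y\in Y(\mathbb{F}_q)$ is fixed the fibre $X_y$ is of finite type over $\kappa(y)$, a subfield of $\mathbb{F}_q$; replacing $X_y$ by $X_y\times_{\kappa(y)}\mathbb{F}_q$ alters neither $\#X_y(\mathbb{F}_q)$ nor $d=\dim X_y$, so we may assume $X_y$ is of finite type over $\mathbb{F}_q$. Next I would perform a chain of reductions, each harmless because $\#(-)(\mathbb{F}_q)$ is subadditive over a finite open cover while the dimension of a member of the cover cannot exceed $d$: since $Y$ is quasi-compact, replace $Y$ by an affine open $\Spec A$ through which $y$ factors, with $A$ finitely generated over $\mathbb{Z}$; then cover the finite-type $\Spec A$-scheme $X$ by finitely many affine opens, so we may take $X=\Spec\big(A[x_1,\dots,x_n]/(f_1,\dots,f_r)\big)$. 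The decisive point is that $n$ and $D:=\max_i\deg f_i$ (degree in the $x_j$) can be chosen to depend only on $\pi$: the fibre $X_y$ is the closed subscheme of $\mathbb{A}^n_{\mathbb{F}_q}$ cut out by the reductions $\bar f_i\in\mathbb{F}_q[x_1,\dots,x_n]$, each of degree $\le D$. It therefore suffices to prove that there is a constant $C=C(n,D)$ with $\#Z(\mathbb{F}_q)\le C\,q^{\dim Z}$ for every prime power $q$ and every closed subscheme $Z\subseteq\mathbb{A}^n_{\mathbb{F}_q}$ definable by polynomials of degree $\le D$.

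I would establish this last bound by induction on $n$, the cases $n=0$ and $\dim Z=n$ being trivial because then $\#Z(\mathbb{F}_q)\le q^{\dim Z}$ outright. For the inductive step set $d=\dim Z<n$ and project onto the last coordinate, $p\colon\mathbb{A}^n_{\mathbb{F}_q}\to\mathbb{A}^1_{\mathbb{F}_q}$; for $a\in\mathbb{F}_q$ the slice $Z_a:=Z\cap p^{-1}(a)\subseteq\mathbb{A}^{n-1}_{\mathbb{F}_q}$ is definable by the degree-$\le D$ polynomials $\bar f_i(x_1,\dots,x_{n-1},a)$, and $\#Z(\mathbb{F}_q)=\sum_{a\in\mathbb{F}_q}\#Z_a(\mathbb{F}_q)$. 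A $d$-dimensional irreducible component of $Z_a$ would be a $d$-dimensional irreducible component of $Z$ lying in the hyperplane $\{x_n=a\}$, and no irreducible set lies in two distinct such hyperplanes; hence $\dim Z_a=d$ for at most as many values of $a$ as $Z$ has $d$-dimensional components, which by a refined Bézout inequality is bounded by a constant depending only on $n$ and $D$ (at most $D^n$, say). For each of these boundedly many ``bad'' values of $a$, the inductive hypothesis gives $\#Z_a(\mathbb{F}_q)\le C(n-1,D)\,q^{d}$; for the remaining $\le q$ values we have $\dim Z_a\le d-1$, so $\#Z_a(\mathbb{F}_q)\le C(n-1,D)\,q^{d-1}$. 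Summing gives $\#Z(\mathbb{F}_q)\le(D^n+1)\,C(n-1,D)\,q^{d}$, so $C(n,D):=(D^n+1)\,C(n-1,D)$ works.

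Feeding this back through the reductions bounds $\#X_y(\mathbb{F}_q)$ by a sum of finitely many terms each $\le C(n,D)\,q^{\dim X_y}$, which is $O(q^d)$ with implied constant depending only on $\pi$, as required. The real obstacle is exactly this uniformity: the naive estimate $\#Z(\mathbb{F}_q)\le q^{\dim Z}$ already fails without a degree constraint --- a union of $q$ parallel hyperplanes in $\mathbb{A}^2_{\mathbb{F}_q}$ is a curve with $q^2$ rational points --- so the argument must exploit both that all fibres $X_y$ come from one fixed ideal, which forces the uniform degree bound $D$, and a Bézout-type count limiting how many coordinate hyperplanes can carry a top-dimensional component (the reason a naive Noether-normalisation argument is awkward here is that the required generic linear change of coordinates need not be defined over a small $\mathbb{F}_q$). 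Everything else is routine bookkeeping of the constants; alternatively one may simply quote Poonen's cited result directly.
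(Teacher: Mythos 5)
Your argument is correct, but note that the paper does not prove this lemma at all: it is imported verbatim as Theorem 7.7.1(i) of Poonen's \emph{Rational Points on Varieties} and used as a black box in the proof of Theorem \ref{thm: counting-dim}. What you have written is therefore a self-contained reconstruction rather than an alternative to anything in the text. Your reductions (base change from $\kappa(y)$ to $\mathbb{F}_q$, passage to finite affine covers of $Y$ and $X$, extraction of the uniform data $n$ and $D$ from a fixed presentation) are all sound, and the induction on the ambient dimension via slicing by $\{x_n=a\}$ is a standard and correct way to get the uniform bound $\#Z(\mathbb{F}_q)\le C(n,D)\,q^{\dim Z}$; in particular the key observation that a top-dimensional component of $Z_a$ must be a top-dimensional component of $Z$ trapped in a single hyperplane, so that the ``bad'' slices are bounded in number by the number of top-dimensional components of $Z$, is exactly right. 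The one place where you lean on an unproved external input is the bound on that number of components by a constant depending only on $n$ and $D$; this is a genuine theorem (a refined B\'ezout inequality, e.g.\ $\deg Z\le D^{\,n-\dim Z}$ for the top-dimensional part, proved by cutting with a generic linear space over $\overline{\mathbb{F}_q}$), and you correctly identify it as the crux of the uniformity. Since the lemma is cited rather than proved in the paper, quoting Poonen directly, as you note at the end, is equally acceptable; but your argument stands on its own modulo that one standard citation.
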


\begin{proof}[Proof of Theorem \ref{thm: counting-dim}]
The fact that $N_\mcX(t)\in\mathbb{Z}[t]$ follows immediately from Theorem \ref{thm: property of Noether F1}.

Let $d:=\reldim\Zs{X}/\mathbb{Z}$. Fix a prime number $p$.
Let $Y=\Spec\mathbb{Z}$ and take $y\in Y(\mathbb{F}_q)$.
Note that $\Zs{X}$ is of finite type over $\mathbb{Z}$.
Put $X_y=\Zs{X}\times_Y\Spec\kappa(y)$, then we have $\dim X_y=d$.
By Lemma \ref{lem: order of rational points}, we have \[N_\mcX(q)=\#X_y(\mathbb{F}_q)=O(q^d),\] since $\#X_y(\mathbb{F}_q)=N_\mcX(q)$ by Theorem \ref{thm: property of Noether F1}. Since $N_\mcX(t)$ is a polynomial, we have $\deg N_\mcX=d=\reldim\Zs{X}/\mathbb{Z}$.
\end{proof}

\begin{cor}\label{cor: CF of torsion-free Noet F1}
Let $\mcX$ be a torsion free Noetherian $\fun$-scheme, and let $\Ms{X}$ (resp. $\Zs{X}$) be the geometric realisation of $\mcX|_{\Mo}$ (resp. $\mcX|_{\CR}$). Assume that $\Zs{X}$ is of finite type over $\mathbb{Z}$. Then the counting function of $\mcX$ is given explicitly as
\[N_\mcX(t)=\sum_{j=0}^{r}\left(\sum_{d=j}^{r}(-1)^{d-j}\binom dj \#I_d\right) t^j,\]
where $r:=\reldim X_{\mathbb{Z}}/\mathbb{Z}$, $r(x):=\rank\sts{\Ms{X},x}^\times$ and $I_d:=\{x\in \Ms{X}\mid r(x)=d\}$.
\end{cor}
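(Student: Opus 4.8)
The plan is to combine the explicit expression for the counting function from Remark \ref{rem: counting function of Noet F1}, namely $N(t)=\sum_{x\in\Ms{X}}(t-1)^{r(x)}$, with the degree identity of Theorem \ref{thm: counting-dim} and the finiteness $\#\Ms{X}<\infty$ shown above. First I would regroup the (finite) sum over $x\in\Ms{X}$ according to the common value $d=r(x)$; since $\Ms{X}$ is finite, this is legitimate and produces
\[
N(t)=\sum_{d\ge 0}\#I_d\,(t-1)^d,
\]
a sum with only finitely many nonzero terms because $I_d=\emptyset$ for all but finitely many $d$.

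Next I would identify the range of $d$. Put $D:=\max\{r(x)\mid x\in\Ms{X}\}$, which exists and is finite. Expanding $(t-1)^d$ by the binomial theorem shows that the coefficient of $t^D$ in $N(t)$ equals $\#I_D\ge 1$ (as $I_D\neq\emptyset$ by the choice of $D$), so $\deg N=D$. By Theorem \ref{thm: counting-dim}, $\deg N=r:=\reldim\Zs{X}/\mathbb{Z}$, hence $D=r$; in particular $I_d=\emptyset$ for every $d>r$ and $N(t)=\sum_{d=0}^{r}\#I_d\,(t-1)^d$.

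Finally I would substitute $(t-1)^d=\sum_{j=0}^{d}(-1)^{d-j}\binom dj t^j$ and interchange the two finite summations, collecting for each $j\in\{0,\dots,r\}$ the coefficient of $t^j$, which is exactly $\sum_{d=j}^{r}(-1)^{d-j}\binom dj\#I_d$. This yields the claimed formula. I do not anticipate any genuine obstacle: the only point that deserves a sentence of justification is that $\deg N=r$ forces $r(x)\le r$ for every $x\in\Ms{X}$, which is the leading-coefficient observation above, and every manipulation involved is a rearrangement of finite sums.
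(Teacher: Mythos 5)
Your proof is correct and follows essentially the same route as the paper's: both start from the formula $N(t)=\sum_{x\in\Ms{X}}(t-1)^{r(x)}$ of Remark \ref{rem: counting function of Noet F1}, regroup by $d=r(x)$, invoke Theorem \ref{thm: counting-dim} to identify the cutoff $r=\deg N$, and expand $(t-1)^d$ and interchange the finite sums. The only difference is that you spell out the leading-coefficient observation showing $\max_{x}r(x)=\deg N$, which the paper states without elaboration.
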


\begin{proof}
By Theorem \ref{thm: counting-dim}, we have $\max\limits_{x\in\Ms{X}} r(x)=\deg N_\mcX=\reldim\Zs{X}/\mathbb{Z}=r$. Since $I_d=\emptyset$ for $d>r$, we have
\begin{align*}
N_\mcX(t)&=\sum_{x\in\Ms{X}}(t-1)^{r(x)}=\sum_{d=0}^{r}\#I_d(t-1)^d\\
&=\sum_{d=0}^{r}\#I_d\sum_{j=0}^d (-1)^{d-j}\binom dj t^j
=\sum_{j=0}^{r}\left(\sum_{d=j}^{r}(-1)^{d-j}\binom dj \#I_d\right) t^j
\end{align*}
by Remark \ref{rem: counting function of Noet F1}.
\end{proof}

%
%
%
%
%
\section{Absolute zeta function and the main theorem}\label{section: AZF-AEP}
%
%
%
%
%

In this section, we state the main theorem of this article. At first, we review the definition and property of the absolute zeta function for a torsion free Noetherian $\fun$-scheme which was introduced by Connes and Consani~\cite{CC2010}. Then, after reviewing Kurokawa's suggestion~\cite{kurokawa2016azf-eng}*{\S 7}, we state our main theorem (Theorem \ref{result: AEP for Noetherian}) that the absolute zeta function for a torsion free Noetherian $\fun$-scheme has the absolute Euler product representation.
Moreover, in Corollary \ref{cor: AEP-linear Mobius transform}, we explicitly determine each factor of the absolute Euler product in terms of the counting function of the $\fun$-scheme.

\subsection{The absolute zeta function of a torsion free Noetherian $\fun$-scheme}\label{section: def of AZF}

In \cite{soule2004}, Soul\'e defined the absolute zeta function for an arithmetic scheme whose counting function is a polynomial as the limit of $Z(X,p^{-s})$ as $p\to 1$ as explained in \S 1. Note that absolute zeta functions for general arithmetic schemes cannot be defined similarly. 

Using this definition, Connes and Consani~\cite{CC2010} defined the absolute zeta function $\zeta_{\mcX/\fun}(s)$ for a torsion free Noetherian $\fun$-scheme $\mcX$ as the absolute zeta function for the geometric realisation of $\mcX|_{\CR}$.

\begin{definition}[{\cite{CC2010}*{\S 2}}]
Let $\mcX$ be a torsion free Noetherian $\fun$-scheme and $\Zs{X}$ be the geometric realisation of $\mcX|_{\CR}$.
Note that there exists the polynomial counting function $N_\mcX(t)$ of $\mcX$ by Theorem \ref{thm: property of Noether F1}. We define the function
\[\zeta_{\mcX/\fun}(s):=\abszeta{\Zs{X}}(s)=\lim_{p\to 1}  (p-1)^{N_\mcX(1)}\exp\left(\sum_{m=1}^\infty \frac{N_\mcX(p^m)}{m}p^{-sm}\right)\]
and call it the \textit{absolute zeta function} for $\mcX$.
\end{definition}

By this definition, we immediately obtain the following proposition.

\begin{prop}[{\cite{soule2004}*{Lemme 1}}]\label{prop: AZF of Noet F1-scheme}
Let $\mcX$ be a torsion free Noetherian $\fun$-scheme and $N_\mcX(t)$ be the counting function of $\mcX$. Put $N_\mcX(t)=\sum\limits_{k=0}^r a_k t^k$ (cf. Theorem \ref{thm: property of Noether F1}). Then, the absolute zeta function of $\mcX$ is expressed as \[\zeta_{\mcX/\fun}(s)=\prod_{k=0}^r(s-k)^{-a_k},\]
and can be continued meromorphically to the whole complex plane $\mathbb{C}$.
\end{prop}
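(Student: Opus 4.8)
The plan is to unwind the definition of $\zeta_{\mcX/\fun}(s)$ and reduce immediately to Proposition \ref{thm: polynomial counting function}. First I would observe that, by definition, $\zeta_{\mcX/\fun}(s) = \abszeta{\Zs{X}}(s)$, so the assertion is really a statement about the absolute zeta function of the arithmetic scheme $\Zs{X}$. By Theorem \ref{thm: property of Noether F1}(2), for every prime power $q = p^m$ we have $\#\Zf{X}(\mathbb{F}_q) = N(q)$; hence the polynomial $N(t) = \sum_{k=0}^{r} a_k t^k$ furnished by Theorem \ref{thm: property of Noether F1}(1) is precisely the counting function of $\Zs{X}$ in the sense used in the definition of the absolute zeta function of an arithmetic scheme.

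Since $N(t)\in\mathbb{Z}[t]$, the polynomiality hypothesis of Proposition \ref{thm: polynomial counting function} is met with $X$ replaced by $\Zs{X}$. Applying that proposition then yields $\abszeta{\Zs{X}}(s) = \prod_{k=0}^{r}(s-k)^{-a_k}$, and therefore $\zeta_{\mcX/\fun}(s) = \prod_{k=0}^{r}(s-k)^{-a_k}$, as claimed. This is the whole proof; it is the reason the paper states that the proposition ``immediately follows.''

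For the reader's convenience I would also recall in one line the computation inside Proposition \ref{thm: polynomial counting function}: expanding $Z(\Zs{X},p^{-s}) = \exp\!\left(\sum_{m\ge 1} \frac{N(p^m)}{m}p^{-sm}\right)$, interchanging the finite sum over $k$ with the sum over $m$, and using $\sum_{m\ge 1}\frac{p^{-m(s-k)}}{m} = -\log(1-p^{-(s-k)})$ gives $Z(\Zs{X},p^{-s}) = \prod_{k=0}^{r}(1-p^{-(s-k)})^{-a_k}$; then $\lim_{p\to 1}\frac{p^{-(s-k)}-1}{p-1} = -(s-k)$ together with $N(1) = \sum_{k} a_k$ produces the factor-by-factor limit. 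There is essentially no obstacle here: all of the substance is already contained in Theorem \ref{thm: property of Noether F1} and Proposition \ref{thm: polynomial counting function}, and the only thing to check is that the polynomiality of the counting function transfers from $\mcX$ to $\Zs{X}$, which is immediate from Theorem \ref{thm: property of Noether F1}(2).
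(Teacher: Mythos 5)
Your proof is correct and follows exactly the route the paper intends: unwind the definition $\zeta_{\mcX/\fun}(s) = \abszeta{\Zs{X}}(s)$, note that Theorem~\ref{thm: property of Noether F1} furnishes the polynomial counting function $N(t)$, and apply Theorem~\ref{thm: polynomial counting function}. The paper itself gives no proof beyond stating that the proposition ``immediately follows'' from that result, so your write-up simply makes that one-line deduction (and the underlying $p\to 1$ computation) explicit.
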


In \cite{CC2010}, Connes and Consani gave a finite product representation of the absolute zeta function of a torsion free Noetherian $\fun$-scheme $\mcX$ which runs over the points of the geometric realisation of $\mcX|_{\Mo}$. Each factor of the product is expressed using the Kurokawa tensor product as defined below.

\begin{definition}[\cite{kurokawa1992multiple}, \cite{manin1995lectures}]\label{def: kurokawa-tensor}
For $i\in\{1,\ldots,r\}$, let $\Phi_i$ be a finite subset of $\mathbb{C}$ and $m_i\colon\Phi_i\to\mathbb{Z}$ be a function.
Let
\[Z_i(s):=\prod_{\rho\in\Phi_i}(s-\rho)^{m_i(\rho)}.\]
We define the \textit{Kurokawa tensor product} as follows;
\[Z_1(s)\otimes\cdots\otimes Z_r(s):=\prod_{(\rho_1,\ldots,\rho_r)\in\Phi_1\times\cdots\times\Phi_r}(s-(\rho_1+\cdots+\rho_r))^{m(\rho_1,\ldots,\rho_r)},\]
where \[m(\rho_1,\ldots,\rho_r):=\begin{cases}m_1(\rho_1)\cdots m_r(\rho_r) & (\text{$\Im(\rho_i)\geq 0$ for each $i$})\\ (-1)^{r-1}m_1(\rho_1)\cdots m_r(\rho_r) & (\text{$\Im(\rho_i)< 0$ for each $i$})\\ 0 & (\text{otherwise}).\end{cases}\]
\end{definition}

\begin{theorem}[{\cite{CC2010}*{Theorem 4.13}}]\label{thm: property of AZF for Noether}
Let $\mcX$ be a torsion free Noetherian $\fun$-scheme. Then we have
\[\zeta_{\mcX/\fun}(s)=\prod_{x\in\Ms{X}}\frac1{\left(1-\frac1s\right)^{\otimes r(x)}},\]
where $r(x):=\rank \sts{\Ms{X},x}^\times$ and $\otimes$ is the Kurokawa tensor product.
\end{theorem}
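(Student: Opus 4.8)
The plan is to compute both sides in closed form and compare. By Proposition~\ref{prop: AZF of Noet F1-scheme} we already know $\zeta_{\mcX/\fun}(s)=\prod_{k=0}^{r}(s-k)^{-a_k}$, where $N(t)=\sum_{k=0}^{r}a_k t^k$ is the (polynomial) counting function of $\mcX$; and by Remark~\ref{rem: counting function of Noet F1} this counting function is $N(t)=\sum_{x\in\Ms{X}}(t-1)^{r(x)}$. So it suffices to evaluate the Kurokawa tensor power $\left(1-\frac1s\right)^{\otimes r}$ explicitly and then to reassemble the resulting linear factors.

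\textbf{Step 1 (computing the tensor power).} Write $1-\frac1s=(s-1)^{1}(s-0)^{-1}$, so in the notation of Definition~\ref{def: kurokawa-tensor} the set of zeros and poles is $\Phi=\{0,1\}$ with $m(1)=1$ and $m(0)=-1$. Since $0$ and $1$ are real, every tuple $(\rho_1,\dots,\rho_r)\in\Phi^{r}$ satisfies $\Im(\rho_i)\ge 0$ for all $i$; hence the ``$(-1)^{r-1}$'' and ``$0$'' cases in the definition of $m(\rho_1,\dots,\rho_r)$ never occur, and the multiplicity at $s=\rho_1+\dots+\rho_r$ is simply $m(\rho_1)\cdots m(\rho_r)$. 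Grouping the $2^{r}$ tuples according to $j:=\#\{i:\rho_i=1\}=\rho_1+\dots+\rho_r$, there are $\binom rj$ tuples for each $j$, and each contributes multiplicity $(-1)^{r-j}$ at $s=j$. Therefore
\[\left(1-\frac1s\right)^{\otimes r}=\prod_{j=0}^{r}(s-j)^{(-1)^{r-j}\binom rj},\qquad\text{hence}\qquad \frac{1}{\left(1-\frac1s\right)^{\otimes r}}=\prod_{j=0}^{r}(s-j)^{-(-1)^{r-j}\binom rj}.\]
For $r=0$ this uses the convention that the empty Kurokawa tensor product equals $s$ (so the factor is $1/s$), which is forced by and consistent with the value $1/s$ of the absolute zeta function of a point; I would state this convention explicitly. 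One could alternatively prove the displayed identity by induction on $r$ from $Z^{\otimes(r+1)}=Z^{\otimes r}\otimes Z$ and Pascal's rule, but the direct count is cleaner.

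\textbf{Step 2 (reassembling).} The set $\Ms{X}$ is finite, since $N(2)=\sum_{x\in\Ms{X}}(2-1)^{r(x)}=\#\Ms{X}$ and $N$ is a polynomial. Taking the product over $\Ms{X}$ and collecting, for each $k$, the total exponent of $(s-k)$, I get
\[\prod_{x\in\Ms{X}}\frac{1}{\left(1-\frac1s\right)^{\otimes r(x)}}=\prod_{k=0}^{r}(s-k)^{-\sum_{x\in\Ms{X}}(-1)^{r(x)-k}\binom{r(x)}{k}},\qquad r:=\max_{x\in\Ms{X}}r(x),\]
the inner sum running effectively over those $x$ with $r(x)\ge k$. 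On the other hand, expanding $N(t)=\sum_{x\in\Ms{X}}(t-1)^{r(x)}$ term by term shows that the coefficient $a_k$ of $t^k$ in $N(t)$ equals exactly $\sum_{x\in\Ms{X}}(-1)^{r(x)-k}\binom{r(x)}{k}$. Hence the exponent of $(s-k)$ above is $-a_k$, so the right-hand side equals $\prod_{k=0}^{r}(s-k)^{-a_k}=\zeta_{\mcX/\fun}(s)$ by Proposition~\ref{prop: AZF of Noet F1-scheme}, which completes the proof.

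\textbf{Main obstacle.} There is no serious obstacle; the argument is essentially bookkeeping. The only points requiring care are (i) verifying that the sign and vanishing conditions built into the Kurokawa tensor product are vacuous here, which is precisely the observation that all zeros and poles of $1-\frac1s$ are real, and (ii) pinning down the convention for the empty Kurokawa tensor product so that the points $x$ with $r(x)=0$ are handled correctly.
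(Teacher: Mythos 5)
Your proof is correct, and the key identity in Step~1,
\[
\left(1-\tfrac1s\right)^{\otimes r}=\prod_{j=0}^{r}(s-j)^{(-1)^{r-j}\binom rj},
\]
is exactly right: since $\Phi=\{0,1\}$ consists of real points, only the first branch of $m(\rho_1,\dots,\rho_r)$ in Definition~\ref{def: kurokawa-tensor} is ever active, and grouping tuples by $j=\rho_1+\dots+\rho_r$ gives the stated exponents. The reassembly in Step~2 then matches $-a_k$ term by term against Proposition~\ref{prop: AZF of Noet F1-scheme}, using Remark~\ref{rem: counting function of Noet F1} and the finiteness of $\Ms{X}$.

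One caveat for context: the paper does not prove Theorem~\ref{thm: property of AZF for Noether} at all --- it is cited verbatim from Connes--Consani \cite{CC2010}*{Theorem 4.13}. So there is no ``paper's proof'' to compare against. What the paper does do, in the first line of the proof of Theorem~\ref{result: AEP for Noetherian}, is silently use precisely your Step~1 expansion (reindexed as $\prod_{j=0}^{r(x)}(s-r(x)+j)^{(-1)^{j+1}\binom{r(x)}{j}}$), without comment; your Step~1 supplies the justification for that line. Also note that Remark~\ref{rem: counting function of Noet F1}, which you rely on, is itself attributed to the very same Theorem~4.13 of \cite{CC2010}: so logically you are deducing one half of that cited theorem (the Kurokawa tensor product formula) from the other half (the explicit form of $N(t)$) together with Soul\'e's Proposition~\ref{prop: AZF of Noet F1-scheme}. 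That is a perfectly sound and economical route, and your explicit remark about the $r(x)=0$ convention for the empty Kurokawa tensor product is well taken --- the paper's own computation in the proof of Theorem~\ref{result: AEP for Noetherian} implicitly uses the same convention.
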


\subsection{Absolute Euler product}

Now, we explain the infinite product structure of an absolute zeta function called the absolute Euler product, which was introduced by Kurokawa~\cite{kurokawa2016azf-eng}*{\S 7}.

An Euler product is an infinite product which runs over prime numbers. It is well-known that the Riemann zeta function has the following Euler product representation;
\[\zeta(s)=\prod_{p\mathrm{:prime}} (1-p^{-s})^{-1}.\]
As a generalisation of the Euler product, for a ring $R$ finitely generated over $\mathbb{Z}$, the zeta function $\zeta_R(s)$ of $R$ (the Hasse zeta function for $\Spec R$) has the following Euler product representation;
\[\zeta_R(s):=\prod_{\mathfrak{a}\in\text{m-Spec}R}(1-(\# R/\mathfrak{a})^{-s})^{-1}=\prod_{p\mathrm{:prime}}\prod_{n=1}^\infty (1-p^{-ns})^{-\kappa(p,n;R)},\]
where $\kappa(p,n;R):=\#\{\mathfrak{a}\in\text{m-Spec}R\mid \# R/\mathfrak{a} = p^n\}$.

\begin{eg}
When $R=\mathbb{Z}[T]$, we have
\[\kappa(p,n;\mathbb{Z}[T])=\frac1n\sum_{m|n}\mu\left(\frac nm\right)p^m\]
and $\zeta_{\mathbb{Z}[T]}(s)=\zeta(s-1)$.
Here, the map $\mu\colon\mathbb{N}\to\{-1,0,1\}$ is called the \textit{M\"obius function} and is defined as
\[\mu(n):=\begin{cases}0 & (\text{$n$ has a squared prime factor})\\ (-1)^k & (\text{$n$ has $k$ distinct prime factors})\end{cases}\]
for any $n\in\mathbb{N}$.
\end{eg}

Next, we consider the Euler product of $Z(X,p^{-s})$. Let $X$ be an arithmetic scheme. Then, we have
\[Z(X,p^{-s})=\prod_{x\in|X|}(1-\#k(x)^{-s})^{-1}=\prod_{n=1}^\infty (1-p^{-ns})^{-\kappa(p,n;X)},\]
where $\kappa(p,n;X):=\#\{x\in |X|\mid \#k(x)=p^n\}$ and $k(x)$ is the residue field of $x$. Thus, the Euler product of $Z(X,p^{-s})$ is
\begin{equation}\label{eq: EP of cong-zeta}
Z(X,p^{-s})=\prod_{l\mathrm{:prime}} \prod_{n=1}^\infty (1-l^{-ns})^{-\kappa(l,n;X)},
\end{equation}
where
\[\kappa(l,n;X)=\begin{cases}\kappa(p,n;X) & (l=p)\\0&(l\ne p).\end{cases}\]

If $p\to 1$ in this representation, it seems that the left-hand side of (\ref{eq: EP of cong-zeta}) becomes the absolute zeta function of $X$ and the right-hand side of (\ref{eq: EP of cong-zeta}) becomes a certain infinite product. However, Kurokawa pointed out that it does not actually work well especially when $X=\mathbb{G}_m$ or $\mathbb{G}_m^2$ since $1-p^{-ns}\to 0$ and $\kappa(p,n;X)\to 0$ as $p\to 1$~\cite{kurokawa2016azf-eng}*{p.131}. Then, through calculations of some specific cases, Kurokawa proposed that the Euler product of the absolute zeta function should be given as follows.

\begin{conj}[Kurokawa's suggestion~{\cite{kurokawa2016azf-eng}*{\S 7.3}}]\label{conj: Kurokawa's suggestion}
Let $X$ be an arithmetic scheme.
Then, there should exist an infinite product expansion of $\abszeta{X}(s)$ called the {\rm absolute Euler product} of the form
\begin{equation}\label{eq: AEP-conj}
    \abszeta{X}(s)=\left(\frac1s\right)^{\abschi(X)}\prod_{n=1}^\infty \left(1-\left(\frac1s\right)^n\right)^{-\kappa(n,X)},
\end{equation}
where $\kappa(n,X)\in\mathbb{Z}$ for any $n\in\mathbb{N}$, $\abschi(X):=N_X(1)$ is the absolute Euler characteristic, and $N_X(t)$ is the counting function of $X$. Moreover, the infinite product (\ref{eq: AEP-conj}) converges absolutely for $\Re(s)>\dim X$.
\end{conj}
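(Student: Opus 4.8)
The plan is to reduce the statement to a formal identity of power series in $u=1/s$ and to evaluate it against the closed form already available for $\zeta_{\mcX/\fun}$. By Proposition~\ref{prop: AZF of Noet F1-scheme} we may write $\zeta_{\mcX/\fun}(s)=\prod_{k=0}^{r}(s-k)^{-a_k}$, where $N(t)=\sum_{k=0}^{r}a_k t^k$ and $r=\deg N$; since $N(1)=\sum_k a_k$, dividing by $(1/s)^{N(1)}=\prod_k s^{-a_k}$ gives
\[
\frac{\zeta_{\mcX/\fun}(s)}{(1/s)^{N(1)}}=\prod_{k=0}^{r}\Bigl(1-\frac ks\Bigr)^{-a_k}=\prod_{k=1}^{r}\Bigl(1-\frac ks\Bigr)^{-a_k},
\]
so it suffices to match this finite product with $\prod_{n\ge 1}\bigl(1-(1/s)^n\bigr)^{-\kappa(n,\Ms{X})}$ and then multiply back by the rational function $(1/s)^{N(1)}$. (Alternatively one can start from the pointwise Kurokawa-tensor description of Theorem~\ref{thm: property of AZF for Noether} and argue one $x\in\Ms{X}$ at a time; the computation is the same.)

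Next I would take logarithms in the range $|s|>r$ — where every factor on the left has $|k/s|<1$ — expand $-\log(1-k/s)=\sum_{m\ge 1}k^m s^{-m}/m$ and $-\log\bigl(1-(1/s)^n\bigr)=\sum_{l\ge 1}s^{-nl}/l$, and compare coefficients of $s^{-m}$. The left side contributes $\tfrac1m\sum_k a_k k^m$ and the right side $\tfrac1m\sum_{n\mid m}n\,\kappa(n,\Ms{X})$, so the theorem is equivalent to $\sum_{n\mid m}n\,\kappa(n,\Ms{X})=\sum_k a_k k^m$ for all $m\ge1$; by M\"obius inversion over the divisor lattice this is equivalent to $\kappa(n,\Ms{X})=\sum_k a_k\kappa_k(n)$ with $\kappa_k(n)=\tfrac1n\sum_{m\mid n}\mu(n/m)k^m$. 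Inserting the explicit coefficients $a_k=\sum_{x\in\Ms{X}}(-1)^{r(x)-k}\binom{r(x)}{k}$ of Remark~\ref{rem: counting function of Noet F1} and interchanging the two finite sums recovers the definition of $\kappa(n,\Ms{X})$ in the statement, the vanishing $\binom{r(x)}{k}=0$ for $k>r(x)$ turning the sum over $k\le r$ into a sum over $j\le r(x)$. Since each $\kappa_j(n)$ is a nonnegative integer (the necklace count), this also yields $\kappa(n,\Ms{X})\in\mathbb{Z}$; I would isolate the passage $N(t)\mapsto(\kappa(n,\Ms{X}))_n$ as the \emph{linear M\"obius transform} lemma underlying Corollary~\ref{cor: AEP-linear Mobius transform}.

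It remains to make the comparison analytic. Using the elementary estimate $\kappa_j(n)=j^n/n+O(j^{n/2}/n)$ together with $a_r=\#\{x:r(x)=r\}\ge 1$, one gets $|\kappa(n,\Ms{X})|=O(r^n/n)$ with $r=\max_x r(x)=\deg N$ (here $\#\Ms{X}<\infty$ is used), whence both $\sum_n|\kappa(n,\Ms{X})|\,|s|^{-n}$ and $\sum_n\bigl|\kappa(n,\Ms{X})\log(1-(1/s)^n)\bigr|$ converge for $|s|>r$; thus the infinite product converges absolutely there to a holomorphic non-vanishing function and the termwise logarithm expansion above is legitimate. Both sides of $\prod_{k=1}^{r}(1-k/s)^{-a_k}=\prod_{n\ge1}(1-(1/s)^n)^{-\kappa(n,\Ms{X})}$ are then holomorphic on $|s|>r$ with logarithms vanishing at $s=\infty$ and equal power-series expansions in $1/s$, hence equal on $|s|>r$; multiplying by $(1/s)^{N(1)}$ and invoking meromorphy of $\zeta_{\mcX/\fun}$ on $\mathbb{C}$ gives the stated equality. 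Finally, if $\Zs{X}$ is of finite type over $\mathbb{Z}$, Theorem~\ref{thm: counting-dim} identifies $r=\deg N$ with $\reldim\Zs{X}/\mathbb{Z}$, which gives the convergence region $|s|>\reldim\Zs{X}/\mathbb{Z}$.

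I expect the main obstacle to be not the combinatorics, which is the routine M\"obius inversion sketched above, but the convergence estimate that pins down the exact region $|s|>\reldim\Zs{X}/\mathbb{Z}$: it requires the sharp asymptotics $\kappa_j(n)\sim j^n/n$, the non-vanishing of the leading coefficient of $N$, and the equality $\deg N=\reldim\Zs{X}/\mathbb{Z}$, which itself rests on Theorem~\ref{thm: counting-dim} and hence on Poonen's bound (Lemma~\ref{lem: order of rational points}).
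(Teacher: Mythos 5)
Your proposal is correct and, as it should, does not attempt the conjecture for a general arithmetic scheme (the absolute zeta function is not even defined in that generality) but targets the paper's reformulation, Theorem~\ref{result: AEP for Noetherian}. The core of the argument --- matching the finite product of linear factors against the infinite product via M\"obius inversion in $u=1/s$ --- is the same as the paper's, but the route through it differs at three points. (i)~The paper isolates the identity $\prod_n(1-u^n)^{\kappa_a(n)}=1-au$ as Lemma~\ref{lem: core of AEP-formula} and applies it factor by factor to the Kurokawa-tensor expression of Theorem~\ref{thm: property of AZF for Noether}; you start from Proposition~\ref{prop: AZF of Noet F1-scheme} and derive the same identity inline by the log/power-series comparison, organised around the coefficients $a_k$ of $N(t)$ rather than the points $x\in\Ms{X}$ --- a cosmetic rearrangement related to the paper's by the binomial interchange you indicate. (ii)~For $\kappa(n,\Ms{X})\in\mathbb{Z}$ you appeal to the necklace interpretation of $\kappa_j(n)$ for $j\ge0$, which is all the theorem needs since only nonnegative indices occur; the paper proves the stronger claim $\kappa_a(n)\in\mathbb{Z}$ for all $a\in\mathbb{Z}$ by a $p$-adic argument. (iii)~For convergence you aggregate to the direct estimate $|\kappa(n,\Ms{X})|=O(r^n/n)$ and bound the log-sum, whereas the paper applies Lemma~\ref{lem: core of AEP-convergence} to each factor $\prod_n(1-(1/s)^n)^{\kappa_{r(x)-j}(n)}$ separately and, by proving divergence for $|u|\ge 1/a$ as well, establishes sharpness of the domain $|s|>\reldim\Zs{X}/\mathbb{Z}$ --- something the statement does not strictly ask for. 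You rightly identify $\deg N=\reldim\Zs{X}/\mathbb{Z}$ (Theorem~\ref{thm: counting-dim}, resting on Poonen's bound, Lemma~\ref{lem: order of rational points}) as the only nontrivial input. One small detail to tighten: convergence of $\sum_n\bigl|\kappa(n,\Ms{X})\log\bigl(1-(1/s)^n\bigr)\bigr|$ yields absolute convergence of the infinite product only after observing that the individual terms tend to $0$, so that $\bigl|(1-(1/s)^n)^{-\kappa(n,\Ms{X})}-1\bigr|$ is comparable to $\bigl|\kappa(n,\Ms{X})\log\bigl(1-(1/s)^n\bigr)\bigr|$; this is implicit in your estimate but worth spelling out, since the paper's Lemma~\ref{lem: core of AEP-convergence} bounds $\sum_n\bigl|(1-u^n)^{\kappa_a(n)}-1\bigr|$ directly.
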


\begin{remark}
Since the absolute zeta function for a general arithmetic scheme is not currently defined, this suggestion assumes that there exists the absolute zeta function for any general arithmetic scheme.
\end{remark}

We now give the main theorem of this article. It is a formulation of Kurokawa's suggestion using torsion free Noetherian $\fun$-schemes. Moreover, we give an explicit form of $\kappa(n,X)$ using the points of the monoid scheme associated with the $\fun$-scheme.

\begin{theorem}\label{result: AEP for Noetherian}
Let $\mcX$ be a torsion free Noetherian $\fun$-scheme and let $\Ms{X}$ (resp. $\Zs{X}$) be the geometric realisation of $\mcX|_{\Mo}$ (resp. $\mcX|_{\CR}$). Then we have
\[\zeta_{\mcX/\fun}(s)=\left(\frac1s\right)^{N_\mcX(1)}\prod_{n=1}^\infty \left(1-\left(\frac1s\right)^n\right)^{-\kappa(n,\Ms{X})}.\]
Here, $N_\mcX(t)\in\mathbb{Z}[t]$ is the counting function of $\mcX$ and
\[\kappa(n,\Ms{X}):=\sum_{x\in\Ms{X}}\sum_{j=0}^{r(x)}(-1)^{r(x)-j}\binom{r(x)}{j}\kappa_j(n), \quad \kappa_j(n):=\frac1n\sum_{m|n}\mu\left(\frac nm\right)j^m,\]
where $r(x):=\rank\sts{\Ms{X},x}^\times$.
Moreover, if $\Zs{X}$ is of finite type over $\mathbb{Z}$, the region of absolute convergence of this absolute Euler product is $\{s\in\mathbb{C}\mid |s|>\reldim\Zs{X}/\mathbb{Z}\}$.
\end{theorem}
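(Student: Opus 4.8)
The plan is to reduce the identity to the classical cyclotomic (Witt) identity for necklace polynomials, and then reassemble the factors. By Proposition~\ref{prop: AZF of Noet F1-scheme} and Remark~\ref{rem: counting function of Noet F1}, writing $N(t)=\sum_{k=0}^{r}a_{k}t^{k}$ with $r:=\deg N$ (this is finite, and $\max_{x\in\Ms{X}}r(x)=r$ because each $(t-1)^{r(x)}$ has leading coefficient $1$ and $\#\Ms{X}<\infty$), one has $\zeta_{\mcX/\fun}(s)=\prod_{k=0}^{r}(s-k)^{-a_{k}}$, $N(1)=\sum_{k=0}^{r}a_{k}$, and $a_{k}=\sum_{x\in\Ms{X}}(-1)^{r(x)-k}\binom{r(x)}{k}$ (with $\binom{r(x)}{k}=0$ for $k>r(x)$). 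Rearranging the double sum that defines $\kappa(n,\Ms{X})$ then gives $\kappa(n,\Ms{X})=\sum_{k=0}^{r}a_{k}\kappa_{k}(n)$, i.e. $\kappa(\cdot,\Ms{X})$ is the image of $N(t)$ under the substitution $t^{k}\mapsto\kappa_{k}(n)$. Hence it is enough to prove, for each integer $k\ge 0$ and all $s$ with $|s|$ large, the single-monomial identity
\[
\frac{1}{s-k}=\frac{1}{s}\prod_{n=1}^{\infty}\Bigl(1-\bigl(\tfrac{1}{s}\bigr)^{n}\Bigr)^{-\kappa_{k}(n)};
\]
raising this to the power $a_{k}\in\mathbb{Z}$ and multiplying over $k$ yields the theorem, since $\prod_{k}(1/s)^{a_{k}}=(1/s)^{N(1)}$ and $\prod_{k}(s-k)^{-a_{k}}=\zeta_{\mcX/\fun}(s)$, the interchange of the finite product over $k$ with the infinite product over $n$ being legitimate once absolute convergence is in hand.

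The heart of the matter is thus the following lemma: for an integer $k\ge 1$ and $u\in\mathbb{C}$ with $|u|<1/k$, the product $\prod_{n\ge 1}(1-u^{n})^{-\kappa_{k}(n)}$ converges absolutely to $1/(1-ku)$, where $\kappa_{k}(n)=\frac{1}{n}\sum_{m\mid n}\mu(n/m)k^{m}$; for $k=0$ every $\kappa_{0}(n)$ vanishes and the statement is trivial. I would prove it by taking $-\log$ of both sides and comparing coefficients of $u^{m}$: on the right this coefficient is $\frac{1}{m}\sum_{n\mid m}n\kappa_{k}(n)$, and since $n\kappa_{k}(n)=\sum_{d\mid n}\mu(n/d)k^{d}$ is the M\"obius transform of $d\mapsto k^{d}$, M\"obius inversion gives $\sum_{n\mid m}n\kappa_{k}(n)=k^{m}$, matching the coefficient $k^{m}/m$ of $-\log(1-ku)$. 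The rearrangement of the double series and the convergence of the product are both controlled by the crude bound $|\kappa_{k}(n)|\le\frac{1}{n}\sum_{m\mid n}k^{m}=O(k^{n}/n)$ together with $k|u|<1$.

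Then I would set $u=1/s$: for $|s|>r$ one has $|u|<1/k$ for every $k$ with $1\le k\le r$, so the lemma gives $\frac{1}{s}\prod_{n}(1-(1/s)^{n})^{-\kappa_{k}(n)}=\frac{1}{s}\cdot\frac{s}{s-k}=\frac{1}{s-k}$ for those $k$ (the case $k=0$ being trivial), and combining over $k$ gives the claimed factorisation on the region $|s|>r$, hence on all of $\mathbb{C}$ once the right-hand side is read as its meromorphic continuation $\prod_{k}(s-k)^{-a_{k}}$ (Theorem~\ref{thm: polynomial counting function}). For the convergence statement, the estimate $|\kappa(n,\Ms{X})|\le\sum_{k}|a_{k}|\,|\kappa_{k}(n)|\le C\,r^{n}/n$ gives $\sum_{n}|\kappa(n,\Ms{X})|\,|1/s|^{n}\le -C\log(1-r/|s|)<\infty$ for $|s|>r$, and since $|\log(1-u^{n})|\le|u|^{n}/(1-|u|)$ for $n\ge 1$ this forces absolute convergence of the absolute Euler product for $|s|>r$; when $\Zs{X}$ is of finite type over $\mathbb{Z}$, Theorem~\ref{thm: counting-dim} identifies $r=\deg N$ with $\reldim\Zs{X}/\mathbb{Z}$.

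I do not expect any single serious obstacle here: the real content is the classical necklace-polynomial identity, and what remains — identifying $\kappa(n,\Ms{X})$ with the linear M\"obius transform of $N(t)$, and the two elementary growth estimates — is routine bookkeeping. The point that needs the most care is making the interchange of the infinite product over $n$ with the finite product over $k$ (and the double-series rearrangement in the lemma) rigorous, which is why I would nail down the absolute-convergence region $|s|>r$ before performing any rearrangement.
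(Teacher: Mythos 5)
Your proposal is correct, and it covers the same mathematical ingredients as the paper, but the organization and (especially) the convergence argument are genuinely different, so let me compare them.

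The paper starts from the Kurokawa tensor product factorization of Theorem~\ref{thm: property of AZF for Noether} and expands $\frac{1}{(1-1/s)^{\otimes r(x)}}$ into the double product over $x\in\Ms{X}$ and $j\in\{0,\dots,r(x)\}$, applying Lemma~\ref{lem: core of AEP-formula} to each factor $1-\frac{r(x)-j}{s}$ and then reassembling exponents; you instead start from Proposition~\ref{prop: AZF of Noet F1-scheme}, work directly with the coefficients $a_k$ of $N(t)=\sum_k a_k t^k$, and observe that $\kappa(n,\Ms{X})=\sum_k a_k\kappa_k(n)$. This is the same algebra, just factored through the polynomial rather than through the points of $\Ms{X}$ — and it has the side benefit of making Corollary~\ref{cor: AEP-linear Mobius transform} (that $\kappa(n,\Ms{X})=M_n(N)$) essentially automatic. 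The core cyclotomic/necklace identity $\prod_n(1-u^n)^{\kappa_a(n)}=1-au$ is the same in both.

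The real divergence is in the convergence proof. The paper devotes the fairly elaborate Lemma~\ref{lem: core of AEP-convergence} to showing that for each fixed $a$ the single product $\prod_n(1-u^n)^{\kappa_a(n)}$ converges absolutely for $|u|<1/a$ and diverges for $|u|\ge 1/a$, via Stirling-type estimates on $\binom{\kappa_a(n)}{m}$ and a careful divergence analysis; it then takes the finite product over $x$ and $j$. You bypass this entirely with the crude bound $|\kappa_a(n)|\le\frac{1}{n}\sum_{m\mid n}a^m=O(a^n/n)$, deduce $\sum_n|\kappa(n,\Ms{X})|\,|u|^n\le C\sum_n(r|u|)^n/n<\infty$ for $|u|<1/r$, and then pass from absolute summability of $\sum\kappa(n,\Ms{X})\log(1-u^n)$ to absolute convergence of the product. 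This is substantially simpler and is sufficient for the theorem as stated (which only asserts convergence on $|s|>\reldim\Zs{X}/\mathbb{Z}$, not sharpness). What you lose is the sharpness of the region, which the paper's lemma establishes as a byproduct but does not actually use in Theorem~\ref{result: AEP for Noetherian}. One minor thing worth mentioning explicitly (the paper does, via Lemma~\ref{lem: core of AEP-formula}) is that $\kappa_a(n)\in\mathbb{Z}$, which is what makes the exponents in the absolute Euler product integers, matching the form required by Conjecture~\ref{conj: Kurokawa's suggestion}; your proof doesn't need this for the identity itself, but the statement is part of what the result is supposed to deliver.
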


We give the proof of Theorem \ref{result: AEP for Noetherian} in \S \ref{subsection: proof of main thm}.
In fact, $\kappa(n,\Ms{X})$ is given as the image of the counting function of $\mcX$ with respect to the following homomorphism.
\begin{definition}
For any $n\in\mathbb{N}$, we define the homomorphism of $\mathbb{Z}$-modules
\[M_n\colon \mathbb{Z}[t]\to \sum_{a\in\mathbb{Z}}\mathbb{Z}\kappa_a(n)\]
such that $M_n(t^a):=\kappa_a(n)$.
\end{definition}

Using this homomorphism $M_n$, Theorem \ref{result: AEP for Noetherian} can be represented only by the counting function $N(t)$.
\begin{cor}\label{cor: AEP-linear Mobius transform}
Let $\mcX$ be a torsion free Noetherian $\fun$-scheme and let $N_\mcX(t)$ be the counting function of $\mcX$. Then we have
\[\zeta_{\mcX/\fun}(s)=\left(\frac1s\right)^{N_\mcX(1)}\prod_{n=1}^\infty \left(1-\left(\frac1s\right)^n\right)^{-M_n(N_\mcX(t))}.\]
Moreover, the region of absolute convergence of this absolute Euler product is $\{s\in\mathbb{C}\mid |s|>\deg N_\mcX\}$.
\end{cor}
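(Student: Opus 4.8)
The plan is to obtain Corollary~\ref{cor: AEP-linear Mobius transform} as a formal reformulation of Theorem~\ref{result: AEP for Noetherian}: the product expansion and the leading factor $(1/s)^{N(1)}$ are already supplied by that theorem, so the entire content is to verify the identity $\kappa(n,\Ms{X})=M_n(N(t))$ for every $n\in\mathbb{N}$, and then to match up the two descriptions of the radius of absolute convergence.

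First I would write out the counting function in its explicit polynomial form. By Remark~\ref{rem: counting function of Noet F1},
\[
N(t)=\sum_{x\in\Ms{X}}(t-1)^{r(x)}=\sum_{x\in\Ms{X}}\sum_{j=0}^{r(x)}(-1)^{r(x)-j}\binom{r(x)}{j}t^j ,
\]
and since $\Ms{X}$ is finite (note $\#\Ms{X}=N(2)$, a finite number), collecting powers of $t$ shows that the coefficient of $t^a$ in $N(t)$ equals $c_a:=\sum_{x\in\Ms{X}}(-1)^{r(x)-a}\binom{r(x)}{a}$, with the convention $\binom{r}{a}=0$ when $a>r$. Because $M_n$ is a homomorphism of $\mathbb{Z}$-modules with $M_n(t^a)=\kappa_a(n)$, applying it to $N(t)=\sum_{a\ge 0}c_a t^a$ and interchanging the two finite summations gives
\[
M_n(N(t))=\sum_{a\ge 0}c_a\,\kappa_a(n)=\sum_{x\in\Ms{X}}\sum_{j=0}^{r(x)}(-1)^{r(x)-j}\binom{r(x)}{j}\kappa_j(n)=\kappa(n,\Ms{X}).
\]
Substituting this into the formula of Theorem~\ref{result: AEP for Noetherian} yields the asserted expression for $\zeta_{\mcX/\fun}(s)$.

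For the convergence assertion I would observe that, since the coefficient of $t^{\deg N}$ in $N(t)=\sum_{x}(t-1)^{r(x)}$ is $\#\{x\in\Ms{X}\mid r(x)=\deg N\}\ge 1$ (there is no cancellation in the top term), one has $\deg N=\max_{x\in\Ms{X}}r(x)$. Hence the region $|s|>\deg N$ is precisely the region $|s|>\max_x r(x)$ in which the product of Theorem~\ref{result: AEP for Noetherian} is shown to converge absolutely — and, when $\Zs{X}$ is of finite type over $\mathbb{Z}$, it also coincides with $|s|>\reldim\Zs{X}/\mathbb{Z}$ by Theorem~\ref{thm: counting-dim}. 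A self-contained check is also easy: from $0\le\kappa_j(n)\le j^n/n$ (the number of aperiodic necklaces of length $n$ on $j$ letters) and $\#\Ms{X}<\infty$ one gets $|\kappa(n,\Ms{X})|=O\big((\deg N)^n/n\big)$, so that $\sum_n|\kappa(n,\Ms{X})|\,|1/s|^n<\infty$ whenever $|s|>\deg N$, which gives absolute convergence of the infinite product.

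I do not anticipate a genuine obstacle: the corollary is a direct consequence of Theorem~\ref{result: AEP for Noetherian}. The only points requiring a word of care are the interchange of summations in the computation of $M_n(N(t))$, legitimate because $\Ms{X}$ is finite and each inner sum is finite, and the small remark that the leading coefficient of the counting function is strictly positive, so that $\deg N=\max_x r(x)$ and the convergence ranges in Theorem~\ref{result: AEP for Noetherian} and in the corollary literally agree.
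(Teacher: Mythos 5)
Your proof is correct and follows essentially the paper's route: verify $\kappa(n,\Ms{X})=M_n(N(t))$ by expanding $N(t)=\sum_{x\in\Ms{X}}(t-1)^{r(x)}$ as in Remark~\ref{rem: counting function of Noet F1} and applying $M_n$ term by term, then substitute into Theorem~\ref{result: AEP for Noetherian} and inherit the convergence. Your explicit observation that the leading coefficient of $N(t)$ is positive, hence $\deg N=\max_{x\in\Ms{X}} r(x)$, is precisely what lets the corollary state the radius as $\deg N$ without the finite-type hypothesis on $\Zs{X}$ that Theorem~\ref{result: AEP for Noetherian} imposes; the paper leaves this point implicit in its one-line remark that the convergence ``can be shown in the same way.''
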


\begin{proof}
By Remark \ref{rem: counting function of Noet F1}, we have $\kappa(n,\Ms{X})=M_n(N_\mcX(t))$. Thus, the absolute Euler product follows from Theorem \ref{result: AEP for Noetherian}. The region of absolute convergence can be shown in the same way as that of Theorem \ref{result: AEP for Noetherian}.
\end{proof}

%
%
%
%
%
\section{Proof of the main theorem}\label{section: proof of AEP}
%
%
%
%
%

In this section, we give a proof of the main theorem (Theorem \ref{result: AEP for Noetherian}).

\subsection{Lemmas}

To prove Theorem \ref{result: AEP for Noetherian}, we first prove the core formula of the absolute Euler product representation and its region of absolute convergence.

\begin{lemma}[{\cite{kurokawa2016azf-eng}*{Exercise 7.1}}]\label{lem: core of AEP-formula}
Let $a\in\mathbb{Z}$, $n\in\mathbb{N}$, and
\[\kappa_a(n):=\frac1n\sum_{m|n}\mu\left(\frac nm\right)a^m.\]
Then, we have $\kappa_a(n)\in\mathbb{Z}$ for any $n\in\mathbb{N}$ and $a\in\mathbb{Z}$, and it holds that in $\mathbb{Z}[\![u]\!]$
\[\prod_{n=1}^\infty (1-u^n)^{\kappa_a(n)}=1-au.\]
\end{lemma}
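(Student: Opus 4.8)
The plan is to prove the identity first as an equality in the ring of formal power series $\mathbb{Q}[[u]]$ (whence it specialises to the analytic identity for $|u|$ small, which is the regime in which the lemma is applied), and to dispose of the integrality claim $\kappa_a(n)\in\mathbb{Z}$ by a separate, essentially combinatorial argument.

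\emph{Integrality.} I would first establish Gauss's congruence $\sum_{m\mid n}\mu(n/m)a^m\equiv 0\pmod n$, equivalently $\kappa_a(n)\in\mathbb{Z}$. For $a\in\mathbb{N}$ this is immediate: by M\"obius inversion $\sum_{m\mid n}\mu(n/m)a^m$ counts the words of length $n$ over an alphabet with $a$ letters that are not a proper power of a shorter word, and the cyclic group $C_n$ acts freely on this set of words, so its cardinality is divisible by $n$. Consequently the polynomial $P_n(t):=\tfrac1n\sum_{m\mid n}\mu(n/m)t^m\in\mathbb{Q}[t]$, which has degree $n$, takes values in $\mathbb{Z}$ at the $n+1$ points $t=0,1,\dots,n$; hence $P_n$ is an integer-valued polynomial (write it in the basis $\binom{t}{k}$), and so $\kappa_a(n)=P_n(a)\in\mathbb{Z}$ for \emph{every} $a\in\mathbb{Z}$.

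\emph{The product identity.} Both sides of $\prod_{n\ge 1}(1-u^n)^{\kappa_a(n)}=1-au$ lie in $1+u\,\mathbb{Q}[[u]]$, the left side being a $(u)$-adically convergent product since the $n$-th factor is $\equiv 1\pmod{u^n}$, so it suffices to compare their formal logarithms. Using $-\log(1-x)=\sum_{k\ge1}x^k/k$ and interchanging the (formally convergent) summations,
\[
-\log\prod_{n\ge1}(1-u^n)^{\kappa_a(n)}=\sum_{n\ge1}\kappa_a(n)\sum_{k\ge1}\frac{u^{nk}}{k}=\sum_{N\ge1}\Bigl(\frac1N\sum_{n\mid N}n\,\kappa_a(n)\Bigr)u^N.
\]
Since $n\,\kappa_a(n)=\sum_{m\mid n}\mu(n/m)a^m$, M\"obius inversion (applied to the pair $N\mapsto a^N$ and $n\mapsto n\kappa_a(n)$) gives $\sum_{n\mid N}n\,\kappa_a(n)=a^N$, so the coefficient of $u^N$ above is $a^N/N$. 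Hence the left-hand side equals $\sum_{N\ge1}(au)^N/N=-\log(1-au)$, and exponentiating yields the identity in $\mathbb{Q}[[u]]$; the crude bound $|\kappa_a(n)|\le|a|^n$ shows the product converges absolutely and locally uniformly for $|u|<|a|^{-1}$ when $|a|\ge 2$ (the cases $a\in\{-1,0,1\}$ being trivial or telescoping), so the equality also holds analytically there.

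\emph{Main obstacle.} The only genuinely non-formal input is the integrality $\kappa_a(n)\in\mathbb{Z}$; once Gauss's congruence is available, the product identity is a routine generating-function manipulation, whose one substantive step is the M\"obius-inversion identity $\sum_{n\mid N}n\,\kappa_a(n)=a^N$.
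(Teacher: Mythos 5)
Your proof is correct, and the treatment of the product identity matches the paper's: both take formal logarithms, interchange the double sum via the substitution $N=nk$, and apply M\"obius inversion to collapse $\sum_{n\mid N}n\kappa_a(n)$ to $a^N$.

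Where you genuinely diverge is the integrality claim $\kappa_a(n)\in\mathbb{Z}$. Your argument is combinatorial: for $a\ge 1$ you identify $n\kappa_a(n)$ with the number of primitive (aperiodic) words of length $n$ over an $a$-letter alphabet, observe that $C_n$ acts freely on that set, and then extend from $a\in\{0,1,\dots,n\}$ to all of $\mathbb{Z}$ by noting that $P_n(t)=\tfrac1n\sum_{m\mid n}\mu(n/m)t^m$ is a degree-$n$ polynomial taking integer values at $n+1$ consecutive integers, hence is integer-valued. The paper instead runs a local argument: it shows $\kappa_a(n)\in\mathbb{Z}_{(p)}$ for every prime $p$ by writing $n=p^\nu u$, pairing the divisors $m=p^{\nu-1}d$ with $m=p^\nu d$, and invoking the lift of Fermat's congruence $y^{p^{e+1}}\equiv y^{p^e}\pmod{p^{e+1}}$. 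The paper even remarks that its $p$-adic proof is deliberately different from Kurokawa's original, and your necklace-counting route is essentially that classical argument. The two approaches trade off differently: yours is more elementary and gives an immediate combinatorial interpretation of $\kappa_a(n)$ (for $a\ge 1$) as a count of Lyndon-type classes, but it needs the extra step through integer-valued polynomials to reach negative $a$; the paper's $p$-adic argument handles all $a\in\mathbb{Z}$ uniformly and exhibits the divisibility prime by prime, at the cost of invoking the structure of $(\mathbb{Z}/p^{e+1}\mathbb{Z})^\times$. Both are valid, and your crude bound $|\kappa_a(n)|\le|a|^n$ is a reasonable (if rough) gesture at convergence, though the paper defers the sharp convergence radius $|u|<1/a$ to a separate lemma with a more careful estimate.
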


\begin{remark}
This formula appears in the definition of the Euler transform~\cite{encyc-integer-sequences}.
\end{remark}

\begin{proof}[Proof of Lemma \ref{lem: core of AEP-formula}]
At first, we show $\kappa_a(n)\in\mathbb{Z}$. We prove this in a different way from Kurokawa's proof~{\cite{kurokawa2016azf-eng}*{Exercise 7.1}}, using the following property of the unit group of $\mathbb{Z}/p^{e+1}\mathbb{Z}$;
\[
\left(\mathbb{Z}/p^{e+1}\mathbb{Z}\right)^\times\cong
\begin{cases}
\mathbb{Z}/(p-1)\mathbb{Z} \times \mathbb{Z}/p^e\mathbb{Z} & (p\text{ : odd})\\
\{1\} & (p=2, e=0)\\
\mathbb{Z}/2\mathbb{Z} \times \mathbb{Z}/2^{e-1}\mathbb{Z} & (p=2, e\geq 1)
\end{cases}
\]
for any $e\in\mathbb{N}_0$ and prime number $p$~\cite{serre2012course}*{Chapter II, Theorem 2}.
Since $y^{p^{e+1}}-y^{p^e}=y^{p^e}(y^{p^e(p-1)}-1)$, we have
\begin{equation}
    y^{p^{e+1}}\equiv y^{p^e}\pmod{p^{e+1}}
    \label{eq: mod p^e}
\end{equation}
for any $y\in\mathbb{Z}$ and $e\in\mathbb{N}_0$.

Since $\mathbb{Z}=\bigcap_{p\mathrm{:prime}} \mathbb{Z}_{(p)}$,
it suffices to show that $\kappa_a(n)\in\mathbb{Z}_{(p)}$ for any prime number $p$. Fix any $a$ and $p$.
If $p\nmid n$, $\kappa_a(n)\in\mathbb{Z}\left[\frac1n\right]\subset\mathbb{Z}_{(p)}$ holds by definition.
We assume that $p\mid n$. By the equation (\ref{eq: mod p^e}), putting $n=p^\nu u$ ($\nu=v_p(n)$, $u\in\mathbb{Z}$),
\begin{align*}
\kappa_a(n)&=\frac1n\sum_{m\mid n}\mu\left(\frac nm\right)a^m
=\frac1{p^\nu u}\sum_{m\mid p^\nu u}\mu\left(\frac {p^\nu u}m\right)a^m\\
&\overset{(\ast)}{=}\sum_{d\mid u}\frac1{p^\nu u}\mu\left(\frac ud\right)\left(a^{p^\nu d}-a^{p^{\nu-1}d}\right)\\
&\in\mathbb{Z}_{(p)} \qquad \left(\because\ a^{p^\nu d}-a^{p^{\nu-1}d}\equiv 0\pmod{p^\nu\mathbb{Z}_{(p)}}\right).
\end{align*}
Here, $(\ast)$ follows by dividing into two cases; $m=p^{\nu-1}d$ and $m=p^\nu d$, where $\frac{p^\nu u}{m}$ does not have any squared prime factor.

Next, we formally calculate the infinite product representation of $1-au$;
\begin{align*}
\log\left(\prod_{n=1}^\infty (1-u^n)^{\kappa_a(n)}\right)
&= \sum_{n=1}^\infty \kappa_a(n)\log(1-u^n)
= -\sum_{n=1}^\infty \sum_{k=1}^\infty \frac{n\kappa_a(n)}{nk}u^{nk}\\
&\overset{\mathrm{(a)}}{=} -\sum_{m=1}^\infty \frac{1}{m}\left(\sum_{n|m} n\kappa_a(n)\right)u^m\\
&= -\sum_{m=1}^\infty \frac{1}{m}\left(\sum_{n|m} \sum_{l|n}\mu\left(\frac nl\right)a^l\right)u^m\\
&\overset{\mathrm{(b)}}{=}-\sum_{m=1}^\infty \frac{1}{m}a^m u^m
= \log(1-au).
\end{align*}
Here, we put $m=nk$ in (a) and we use the M\"obius inversion formula in (b).
\end{proof}

\begin{lemma}\label{lem: core of AEP-convergence}
Let $a\in\mathbb{Z}$ and $u\in\mathbb{C}$.
If $a\geq 2$, then the region of absolute convergence of the infinite product in Lemma \ref{lem: core of AEP-formula}
\[\prod_{n=1}^\infty (1-u^n)^{\kappa_a(n)}\]
is $\{u\in\mathbb{C}\mid |u|<\frac1a\}$. If $a=1$, then its region of absolute convergence is $\mathbb{C}$.
\end{lemma}

To prove this lemma, we show the following property of $\kappa_a(n)$.

\begin{prop}\label{prop: order of kappa}
For any $a\in\mathbb{N}_0$ and $n\in\mathbb{N}$,
\[\left|\kappa_a(n)-\frac{a^n}n\right|\leq\frac{a^{\left\lfloor n/2\right\rfloor+1}}n.\]
\end{prop}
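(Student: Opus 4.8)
The plan is to isolate the dominant term $m=n$ in the definition of $\kappa_a(n)$ and to bound the contribution of the proper divisors by a crude geometric estimate. Since $\mu(1)=1$, separating off the summand with $m=n$ gives
\[\kappa_a(n)-\frac{a^n}{n}=\frac1n\sum_{\substack{m\mid n\\ m<n}}\mu\!\left(\frac nm\right)a^m,\]
so the whole problem reduces to estimating the sum on the right. First I would use the elementary observation that every proper divisor $m$ of $n$ satisfies $m\le\left\lfloor n/2\right\rfloor$: writing $n=md$ with $d\ge 2$ forces $m=n/d\le n/2$, and $m$ is an integer. Combined with $|\mu|\le 1$, this yields
\[\left|\kappa_a(n)-\frac{a^n}{n}\right|\le\frac1n\sum_{\substack{m\mid n\\ m<n}}a^m\le\frac1n\sum_{j=1}^{\left\lfloor n/2\right\rfloor}a^j.\]

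It then remains to check the purely numerical inequality $\sum_{j=1}^{k}a^j\le a^{k+1}$ with $k=\left\lfloor n/2\right\rfloor$. For $a\ge 2$ this is immediate from $\sum_{j=1}^{k}a^j=\frac{a^{k+1}-a}{a-1}\le\frac{a^{k+1}}{a-1}\le a^{k+1}$, so the estimate is uniform in $a$ and no $a$-dependent constant appears. The degenerate values $a=0$ and $a=1$ I would dispose of by hand: for $a=0$ both sides of the claimed inequality vanish, and for $a=1$ the Möbius identity gives $\kappa_1(n)=\frac1n\sum_{m\mid n}\mu(n/m)=\frac1n[n=1]$, so $\bigl|\kappa_1(n)-\tfrac1n\bigr|$ equals $1/n$ when $n>1$ and $0$ when $n=1$, in either case bounded by $1^{\left\lfloor n/2\right\rfloor+1}/n$.

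The argument is entirely elementary, and I do not expect a genuine obstacle; the only places calling for a little attention are the bound $m\le\left\lfloor n/2\right\rfloor$ on proper divisors (which is exactly what produces the exponent $\left\lfloor n/2\right\rfloor+1$ in the statement) and the verification that the geometric-series inequality holds for all $a\ge 2$ with the constant $1$, together with the separate treatment of $a\in\{0,1\}$ where the formula for $\kappa_a(n)$ degenerates.
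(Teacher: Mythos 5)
Your proposal is correct and follows essentially the same argument as the paper: isolate the $m=n$ term in $n\kappa_a(n)$, bound the remaining contribution by noting that every proper divisor of $n$ is at most $\lfloor n/2\rfloor$, sum the geometric series and compare with $a^{\lfloor n/2\rfloor+1}$ for $a\geq 2$, and treat $a=0$ and $a=1$ directly. The only (immaterial) difference is that the paper majorizes by $\sum_{m=0}^{\lfloor n/2\rfloor}a^m$ whereas you start the majorizing sum at $m=1$.
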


\begin{proof}
At first, we consider the easy cases. We have $\kappa_0(n)=0$ and $\kappa_a(1)=a$. When $a=1$, we have
\[\kappa_1(n)=\frac1n\sum_{m|n}\mu\left(\frac nm\right)=\frac1n\delta_{1n}=\begin{cases}1 & (n=1)\\ 0 & (n\ne 1)\end{cases}\]
by the property of the M\"obius function.

Let $a, n\geq 2$. Then we have
\[|n\kappa_a(n)-a^n|\leq\sum_{\substack{m|n\\m\ne n}}a^m \leq \sum_{m=0}^{\left\lfloor n/2\right\rfloor}a^m=\frac{a^{\left\lfloor n/2\right\rfloor+1}-1}{a-1}\leq a^{\left\lfloor n/2\right\rfloor+1}.\]
Thus, the claim follows.
\end{proof}

\begin{cor}
The values and signs of $\kappa_a(n)$'s are as in Table \ref{table: kappa}.
\begin{table}[ht]
    \centering
    \begin{tabular}{c||cc|ccc|cc}
        $n$\textbackslash $a$ & \multicolumn{2}{c|}{$\cdots$ \quad $-2$} & $-1$ & $0$ & $1$ & $2$ & $\cdots$\\\hline\hline
        $1$ & \multicolumn{2}{c|}{\ $\cdots$\quad\ $-2$\ \ } & $-1$ & $0$ & $1$ & $2$ & $\cdots$\\\hline
        $2$ & \multicolumn{2}{c|}{\multirow{3}{*}{\begin{tabular}{cl}$+$ & {\rm (if $2|n$)}\\$-$ & {\rm (if $2\nmid n$)}\end{tabular}}} & $1$ & $0$ & $0$ & \multicolumn{2}{c}{\multirow{3}{*}{$+$}}\\
        $3$ & & & $0$ & $0$ & $0$ & &\\
        \vdots & & & \vdots & \vdots & \vdots & &\\
    \end{tabular}
    \caption{the values and signs of $\kappa_a(n)$'s}
    \label{table: kappa}
\end{table}
\end{cor}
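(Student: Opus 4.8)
The plan is to verify Table~\ref{table: kappa} region by region: the columns $a \in \{-1,0,1\}$ and the row $n = 1$ by explicit computation, and the \emph{sign} of $\kappa_a(n)$ for $|a| \ge 2$ from the size estimate in Proposition~\ref{prop: order of kappa}.

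First I would dispose of the explicit entries. Directly from the definition, $\kappa_a(1) = \mu(1)a = a$, which is the row $n = 1$. For $a = 0, 1$ one computes $\kappa_0(n) = 0$ and, as already recorded in the proof of Proposition~\ref{prop: order of kappa}, $\kappa_1(n) = \frac1n\delta_{1n}$; for $a = -1$ one has $\kappa_{-1}(n) = \frac1n\sum_{m\mid n}\mu(n/m)(-1)^m$, which is $-1, 1, 0, 0, \ldots$ for $n = 1, 2, 3, 4, \ldots$ by a short case check on $\mu$. Alternatively, these three columns follow at once from Lemma~\ref{lem: core of AEP-formula} together with the (easily verified) uniqueness of the exponents in a product expansion $\prod_n (1-u^n)^{c_n}$, since $1 = \prod_n(1-u^n)^0$, $\ 1-u = (1-u)^1$, and $1+u = (1-u^2)^1(1-u)^{-1}$.

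Next I would establish the sign of $\kappa_a(n)$ when $|a| \ge 2$. The key is the uniform bound
\[|n\kappa_a(n) - a^n| \le |a|^{\lfloor n/2\rfloor + 1} \qquad (|a| \ge 2,\ n \in \mathbb{N}),\]
which for $a \ge 2$ is Proposition~\ref{prop: order of kappa} and for $a \le -2$ follows by rerunning its proof with absolute values: the divisor $m = n$ contributes $\mu(1)a^n = a^n$, and the remaining terms are indexed by proper divisors $m \mid n$, all satisfying $m \le \lfloor n/2\rfloor$, so their total absolute value is at most $\sum_{m=0}^{\lfloor n/2\rfloor}|a|^m \le |a|^{\lfloor n/2\rfloor + 1}$. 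For $n \ge 3$ we have $\lfloor n/2\rfloor + 1 < n$, hence $|a|^{\lfloor n/2\rfloor + 1} < |a|^n = |a^n|$, so $n\kappa_a(n)$ has the same sign as $a^n$: positive for $a \ge 2$, and of sign $(-1)^n$ for $a \le -2$ (i.e.\ positive for $n$ even, negative for $n$ odd). The two small cases are handled by hand: $\kappa_a(1) = a$, and $\kappa_a(2) = \tfrac12(a^2 - a) = \tfrac12 a(a-1)$, which is positive for every integer $a$ with $|a| \ge 2$. Assembling these statements reproduces the whole table.

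I do not anticipate a real obstacle; the only point needing care is that Proposition~\ref{prop: order of kappa} is stated only for $a \in \mathbb{N}_0$, so the range $a \le -2$ requires the routine sign-tracking rerun above, and the cases $n = 1, 2$ must be separated out because the inequality $\lfloor n/2\rfloor + 1 < n$ fails there.
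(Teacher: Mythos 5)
Your proposal is correct and follows essentially the same approach as the paper: both rest on the size estimate of Proposition~\ref{prop: order of kappa} to show that $n\kappa_a(n)$ takes the sign of its leading term $a^n$ once $|a|\ge 2$, and both treat the columns $a\in\{-1,0,1\}$ and small $n$ by direct computation. One small improvement in your version: you explicitly isolate $n=1,2$ (where $\lfloor n/2\rfloor+1<n$ fails) and verify $\kappa_a(2)=\tfrac12 a(a-1)>0$ by hand, and you note that Proposition~\ref{prop: order of kappa} is stated only for $a\in\mathbb{N}_0$ and so needs a sign-tracking rerun for $a\le -2$, whereas the paper's proof compresses all of this into ``in the similar way to prove Proposition~\ref{prop: order of kappa},'' leaving the $n=2$ case implicit. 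Your alternative derivation of the $a=-1$ column from Lemma~\ref{lem: core of AEP-formula} and uniqueness of exponents (via $1+u=(1-u^2)(1-u)^{-1}$) is also a clean replacement for the paper's bound-plus-integrality argument there.
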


\begin{proof}
By Proposition \ref{prop: order of kappa}, it suffices to show the case when $a\leq -1$.

Let $a=-1$. We have $\kappa_{-1}(1)=-1$ and $\kappa_{-1}(2)=1$, hence we consider the case when $n\geq 3$. Then, we have
\[\kappa_{-1}(n)=\frac1n\sum_{m|n}\mu\left(\frac nm\right)(-1)^m = 0.\]
Indeed, since the number of divisors of $n$ is at most $\left\lfloor\frac n2\right\rfloor+1$, $\kappa_{-1}(n)$ is in $-\frac12-\frac1{n}\leq\kappa_{-1}(n)\leq\frac12+\frac1{n}$, then we have $\kappa_{-1}(n)=0$ since $\kappa_{-1}(n)\in\mathbb{Z}$ by Lemma \ref{lem: core of AEP-formula}.

Let $a\geq 2$. Since
\[n\kappa_{-a}(n)=\sum_{m|n}\mu\left(\frac nm\right)(-1)^m a^m = (-1)^n a^n+\cdots-\mu(n)a,\]
we have $\kappa_{-a}(n)>0$ when $2\mid n$ and $\kappa_{-a}(n)<0$ when $2\nmid n$ in the similar way to prove Proposition \ref{prop: order of kappa}.
\end{proof}

\begin{proof}[Proof of Lemma \ref{lem: core of AEP-convergence}]
When $a=1$, $\kappa_1(n)=\delta_{1n}$ and thus the infinite product converges for any $u\in\mathbb{C}$. We assume $a\geq 2$ in the following. At first, we show that the infinite product converges absolutely at least for $|u|<\frac1a$. Since
\[\prod_{n=1}^\infty (1-u^n)^{\kappa_a(n)}=\prod_{n=1}^\infty \left(1+((1-u^n)^{\kappa_a(n)}-1)\right),\]
it suffices to show that
\[\sum_{n=1}^\infty |(1-u^n)^{\kappa_a(n)}-1|\]
converges for $|u|<\frac1a$. At first, we calculate the upper bound of the binomial coefficient $\binom{\kappa_a(n)}{m}$.
\[\binom{\kappa_a(n)}{m}\leq\frac{\kappa_a(n)^m}{m!}
<\frac{\kappa_a(n)^m}{\sqrt{2\pi m}\left(\frac me\right)^m}
=\frac{\kappa_a(n)^m e^m}{\sqrt{2\pi}m^{m+\frac12}}
\leq \frac1{\sqrt{2\pi}}\left(\frac{e\kappa_a(n)}{m}\right)^m.\]
Since $\displaystyle{\kappa_a(n)\leq\frac{a^n+a^{\lfloor n/2\rfloor+1}}n\leq \frac{2a^n}n}$ by Proposition \ref{prop: order of kappa}, we have
\[\binom{\kappa_a(n)}{m}<\frac1{\sqrt{2\pi}}\left(\frac{e\kappa_a(n)}{m}\right)^m\leq \frac1{\sqrt{2\pi}}\left(\frac{2ea^n}{nm}\right)^m.\]
Putting $|u|=\frac1a-\varepsilon$ $\left(0<\varepsilon<\frac1a\right)$, we have
\begin{align*}
\sum_{n=1}^\infty |(1-u^n)^{\kappa_a(n)}-1|
&= \sum_{n=1}^\infty \left|\sum_{m=1}^{\kappa_a(n)}(-1)^m\binom{\kappa_a(n)}{m}u^{nm}\right|\\
&\leq \sum_{n=1}^\infty \sum_{m=1}^{\kappa_a(n)}\binom{\kappa_a(n)}{m}|u|^{nm}
= \sum_{n=1}^\infty \sum_{m=1}^{\kappa_a(n)}\binom{\kappa_a(n)}{m}\left(\frac1a-\varepsilon\right)^{nm}\\
&< \sum_{n=1}^\infty \sum_{m=1}^{\kappa_a(n)}\frac1{\sqrt{2\pi}}\left(\frac{2ea^n}{nm}\right)^m \left(\frac{1-\varepsilon a}{a}\right)^{nm}\\
&\leq \sum_{n=1}^\infty \sum_{m=1}^{\kappa_a(n)}\frac1{\sqrt{2\pi}}\left(\frac{2e(1-\varepsilon a)^n}{n}\right)^m.
\end{align*}
Here, we put $\displaystyle{c_\varepsilon=\frac1{1-\varepsilon a}}$ ($c_\varepsilon>1$) and $\displaystyle{r_n=\frac{2e(1-\varepsilon a)^n}{n}=\frac{2e}{nc_\varepsilon^n}}$. Then,
\[\sum_{n=1}^\infty |(1-u^n)^{\kappa_a(n)}-1|
< \sum_{n=1}^\infty \frac1{\sqrt{2\pi}}\sum_{m=1}^{\kappa_a(n)}r_n^m
= \frac1{\sqrt{2\pi}}\sum_{n=1}^\infty \frac{r_n^{\kappa_a(n)}-r_n}{r_n-1}.\]
We take and fix sufficiently large $N\in\mathbb{N}$ satisfying $c_\varepsilon^n>n$ and $r_n<1$ for any $n\geq N$.
\begin{align*}
\sum_{n=1}^\infty |(1-u^n)^{\kappa_a(n)}-1|
&< \frac1{\sqrt{2\pi}}\left(\sum_{n=1}^{N-1} \frac{r_n^{\kappa_a(n)}-r_n}{r_n-1}+\sum_{n=N}^\infty \frac{r_n-r_n^{\kappa_a(n)}}{1-r_n}\right)\\
&\leq \frac1{\sqrt{2\pi}}\left(\sum_{n=1}^{N-1} \frac{r_n^{\kappa_a(n)}-r_n}{r_n-1}+\sum_{n=N}^\infty \frac{r_n}{1-r_N}\right)\\
&\leq \frac1{\sqrt{2\pi}}\left(\sum_{n=1}^{N-1} \frac{r_n^{\kappa_a(n)}-r_n}{r_n-1}+\frac{1}{1-r_N}\sum_{n=N}^\infty \frac{2e}{n^2}\right)\\
&\leq \frac1{\sqrt{2\pi}}\left(\sum_{n=1}^{N-1} \frac{r_n^{\kappa_a(n)}-r_n}{r_n-1}+\frac{2e}{1-r_N}\zeta(2)\right)<\infty.
\end{align*}
Therefore, the series above converges. Thus, the infinite product converges absolutely at least for $|u|<\frac1a$ when $a\geq 2$.

Next, we show that the infinite product does not converge absolutely for $|u|\geq \frac1a$ when $a\geq 2$. It is easy to show that $\sum_{n=1}^\infty |(1-u^n)^{\kappa_a(n)}-1|$ diverges when $|u|\geq 1$. Hence, we may assume that $\frac1a\leq|u|<1$.

Firstly, we prove that it diverges when $u=\frac1a$.
\begin{align*}
\sum_{n=1}^\infty |(1-u^n)^{\kappa_a(n)}-1|
&=\sum_{n=1}^\infty \left(1-\left(1-\frac1{a^n}\right)^{\kappa_a(n)}\right)\\
&=\sum_{n=1}^\infty \frac{\kappa_a(n)}{a^n}\sum_{m=1}^{\kappa_a(n)} (-1)^{m+1}\frac1{\kappa_a(n)}\binom{\kappa_a(n)}{m}\frac1{a^{n(m-1)}}.
\end{align*}
Let $\displaystyle{I_n:=\sum_{m=1}^{\kappa_a(n)} (-1)^{m+1}\frac1{\kappa_a(n)}\binom{\kappa_a(n)}{m}\frac1{a^{n(m-1)}}}$ and let $n\geq 3$. Then, we have
\begin{align*}
    |I_n-1|&=\left|\sum_{m=2}^{\kappa_a(n)}(-1)^{m+1}\frac{\kappa_a(n)-1}{a^n}\cdots\frac{\kappa_a(n)-m+1}{a^n}\cdot\frac1{m!}\right|\\
    &\leq\sum_{m=2}^{\kappa_a(n)}\left|\frac{\kappa_a(n)-1}{a^n}\right|\cdots\left|\frac{\kappa_a(n)-m+1}{a^n}\right|\cdot\frac1{m!}.
\end{align*}
Here, for any $i\in\{1,\ldots,m-1\}$, we have
\[\left|\frac{\kappa_a(n)-i}{a^n}\right|\leq\frac1n+\frac1{a^{n-\left\lfloor n/2\right\rfloor-1}n}\leq\frac2n\]
by Proposition \ref{prop: order of kappa}. Hence,
\[|I_n-1| \leq \sum_{m=2}^{\kappa_a(n)}\frac{2^{m-1}}{n^{m-1}}\cdot\frac1{m!} \leq \sum_{m=2}^{\kappa_a(n)}\frac1{n^{m-1}} = \frac{1-n^{1-\kappa_a(n)}}{n-1}<\frac{1}{n-1}\leq\frac{1}{2}\]
for $n\geq 3$. Therefore, we have $I_n\geq\frac12$ for $n\geq 3$ and then
\begin{equation}\label{eq: evaluation u=1/a}
    \sum_{n=1}^\infty \left(1-\left(1-\frac1{a^n}\right)^{\kappa_a(n)}\right) \geq \sum_{n=3}^\infty \frac{\kappa_a(n)}{a^n} I_n \geq \sum_{n=3}^\infty\frac1{2n}\cdot\frac12 =\infty,
\end{equation}
since for $n\geq3$
\[\frac{\kappa_a(n)}{a^n}\geq \frac1n-\frac1{a^{n-\left\lfloor n/2\right\rfloor-1}n}\geq \frac1{2n}.\]

Secondly, we prove that $\sum_{n=1}^\infty |(1-u^n)^{\kappa_a(n)}-1|$ diverges for $\frac1a\leq |u|<1$.
Let $r=\frac{\Arg u}{2\pi}$ ($0\leq r<1$).
Assume that $r\in\mathbb{Q}$. We put $r=\frac{k_1}{k_2}$ ($k_1\in\mathbb{Z}$, $k_2\in\mathbb{N}$). Let $N=\{k_2 m\mid m\in\mathbb{N}\}$, then it holds that $u^n=|u|^n\geq \frac{1}{a^n}$ for any $n\in N$. Therefore, we have
\[\sum_{n=1}^\infty |(1-u^n)^{\kappa_a(n)}-1|\geq \sum_{n\in N} \left(1-\left(1-|u|^n\right)^{\kappa_a(n)}\right) \geq \sum_{n\in N} \left(1-\left(1-\frac1{a^n}\right)^{\kappa_a(n)}\right).\]
In the similar way as the inequality (\ref{eq: evaluation u=1/a}), we have
\[\sum_{n\in N}\left(1-\left(1-\frac1{a^n}\right)^{\kappa_a(n)}\right)\geq \frac14\sum_{n\in N\cap[3,\infty)}\frac1{n}.\]
Here, since the natural density of $N$ is $\frac{1}{k_2}>0$ and coincides with the Dirichlet density of $N$, the last infinite sum diverges. Thus, the series $\sum_{n=1}^\infty |(1-u^n)^{\kappa_a(n)}-1|$ diverges if $r\in\mathbb{Q}$.

Assume that $r\not\in\mathbb{Q}$. If $n\in\mathbb{N}$ satisfies $|1-u^n|\geq 1+\frac{1}{2a^n}$, it holds that
\begin{align*}
    \left|(1-u^n)^{\kappa_a(n)}-1\right|&\geq \left||1-u^n|^{\kappa_a(n)}-1\right| 
    \geq \left(1+\frac{1}{2a^n}\right)^{\kappa_a(n)}-1\\
    &>\frac{\kappa_a(n)}{2a^n} \overset{(\ast)}{\geq} \frac{1}{2n}\left(1-\frac{1}{a^{n-\left\lfloor n/2\right\rfloor-1}}\right) \geq \frac{1}{4n}
\end{align*}
by applying Proposition \ref{prop: order of kappa} to $(\ast)$. Hence, for every $N\subset\mathbb{N}$ which consists of $n\in\mathbb{N}$ satisfying $|1-u^n|\geq 1+\frac{1}{2a^n}$, we have
\[\sum_{n=1}^\infty |(1-u^n)^{\kappa_a(n)}-1| \geq \sum_{n\in N} |(1-u^n)^{\kappa_a(n)}-1| > \frac{1}{4}\sum_{n\in N} \frac{1}{n}.\]
Here, if the natural density of $N$ is positive, then the last infinite sum diverges and hence the series $\sum_{n=1}^\infty |(1-u^n)^{\kappa_a(n)}-1|$ diverges.
Therefore, it suffices to show that there exists $N\subset\mathbb{N}$ with positive natural density such that $|1-u^n|\geq 1+\frac{1}{2a^n}$ for any $n\in N$.

As described in Figure \ref{fig: theta}, let $p_n\in\mathbb{C}$ be the intersection of $|z|=|u|^n$ and $|z-1|=1+\frac{1}{2a^n}$ whose imaginary part is positive and let $\theta_n:=\Arg p_n$. Let $\Theta_n:=\{\theta\in\mathbb{R}\mid \theta_n\leq\theta\leq 2\pi-\theta_n\}$.
\begin{figure}[ht]
\centering
\begin{tikzpicture}[scale=2]
  \begin{scope}
    \fill[pattern=north west lines] (0,0) circle (1);
    \fill[black!7] (0,0) circle (1);
    \fill[white] (1,0) circle (1+1/2^3);
 \end{scope}
 \draw[->,>=stealth,very thick] (-1.2,0)--(2.4,0)node[above]{$\Re$}; 
 \draw[->,>=stealth,very thick] (0,-1.2)--(0,1.2)node[right]{$\Im$}; 
 \draw[name path=Cp,semithick,samples=100,domain=0:2*pi,variable=\t] plot({(1+1/2^3)*cos(\t r)+1},{(1+1/2^3)*sin(\t r)});
 \draw[semithick,samples=100,domain=0:2*pi,variable=\t] plot({cos(\t r)/2^2},{sin(\t r)/2^2});
 \draw[name path=C,semithick,samples=100,domain=0:2*pi,variable=\t] plot({cos(\t r)},{sin(\t r)});
 \draw[dotted, semithick,samples=100,domain=0:2*pi,variable=\t] plot({cos(\t r)+1},{sin(\t r)});
 \draw (1,0)node[above right]{$1$};
 \draw (2,0)node[below]{$2$};
 \draw (1/2^2,0)node[below=3.5mm, right=-2.2mm]{$\frac1{a^n}$};
 \draw ({2+1/2^3},0)node[below right]{$2+\frac1{2a^n}$};
 \draw[name path=Cu] (0,0) circle (6/7);
 \draw (6/7,0)node[below=3mm, left=-1mm]{$|u|^n$};
 \path[name intersections={of= Cu and Cp, by={pp,-pp}}];
 \fill[black] (pp) circle (0.025);
 \fill[black] (-1/2^3,0) circle (0.025);
 \draw[semithick] (0,0) to (pp);
 \coordinate[label=right:$p_n$] (pp1) at (0.7,0.85);
 \draw plot [smooth, tension=1] coordinates {(pp1) (0.5,0.86) (pp)};
 \draw[semithick] ([shift={(0,0)}]0:0.35) arc[radius=0.35, start angle=0, end angle= 74];
 \draw (-0.3,-0.5)node{$-\frac{1}{2a^n}$};
 \draw plot [smooth, tension=1] coordinates {(-1/2^3,0) (-0.2,-0.17) (-0.25,-0.36)};
 \draw (0.42,0.28)node{$\theta_n$};
\end{tikzpicture}
\caption{the definition of $p_n$ and $\theta_n$ in the complex plane}
\label{fig: theta}
\end{figure}
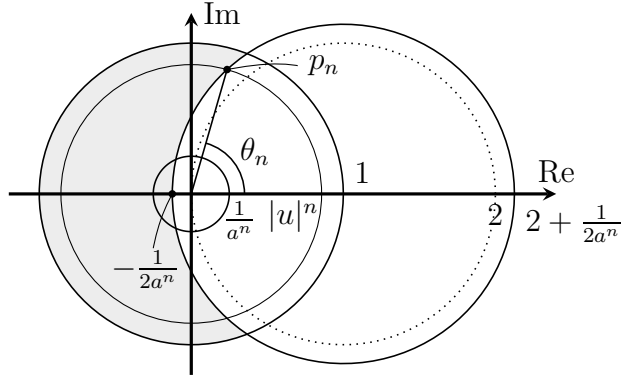
Note that if $\Arg(u^n)\in\Theta_n$, then such $n$ satisfies $|1-u^n|\geq 1+\frac{1}{2a^n}$. Since $\frac{1}{a}\leq |u|<1$, we have
\[\cos\theta_n =\frac{|u|^n}{2}-\frac{1}{2a^n |u|^n}-\frac{1}{8a^{2n}|u|^n} \geq \frac{1}{2a^n}-\frac{1}{2}-\frac{1}{8a^n} = \frac{3}{8a^n}-\frac{1}{2}\geq -\frac{1}{2}\]
for any $n\in\mathbb{N}$. Hence, we have $\theta_n\leq \frac{2}{3}\pi$ for any $n\in\mathbb{N}$. Therefore, it holds that $\Theta_n\supset \left[\frac{2}{3}\pi, \frac{4}{3}\pi\right]=:\Theta_\infty$ for any $n\in\mathbb{N}$. Let $N:=\{n\in\mathbb{N}\mid \Arg(u^n)\in \Theta_\infty\}$. Here, any $n\in N$ satisfies $|1-u^n|\geq 1+\frac{1}{2a^n}$, since $N\subset\{n\in\mathbb{N}\mid\Arg(u^n)\in\Theta_n\}$. Therefore, it suffices to show that the natural density of $N$ is positive.
We define $T_r\colon\mathbb{R}/\mathbb{Z}\to\mathbb{R}/\mathbb{Z}$ by $T_r(x)=x+r\bmod\mathbb{Z}$ and $\overline{\Theta}_\infty:=\left[\frac13,\frac23\right]\bmod\mathbb{Z}$.
Then, it holds that $N=\{n\in\mathbb{N}\mid T_r^n(0)\in\overline{\Theta}_\infty\}$.
Since $r\not\in\mathbb{Q}$, the continuous map $T_r$ on $\mathbb{R}/\mathbb{Z}$ is uniquely ergodic~\cite{EW2011ergodic-NT}*{Example 1.3 and Example 4.11}. Therefore, since $\overline{\Theta}_\infty$ is an interval in $\mathbb{R}/\mathbb{Z}$, we have
\[\lim_{M\to\infty}\frac{1}{M}\#\{n\in\mathbb{N}_0\cap[0,M)\mid T_r^n(x)\in\overline{\Theta}_\infty\}=m_{\mathbb{R}/\mathbb{Z}}(\overline{\Theta}_\infty)=\frac13\]
for every $x\in\mathbb{R}/\mathbb{Z}$ by \cite{EW2011ergodic-NT}*{Example 4.18 and Lemma 4.17} (also see \cite{EW2011ergodic-NT}*{Example 1.3}), where $m_{\mathbb{R}/\mathbb{Z}}$ is the Lebesgue measure on $\mathbb{R}/\mathbb{Z}$.
By putting $x=0$, we have
\[\lim_{M\to\infty}\frac{\#(N\cap[1,M])}{M} =\lim_{M\to\infty}\frac{1}{M}\#\{n\in\mathbb{N}_0\cap[0,M)\mid T_r^n(0)\in\overline{\Theta}_\infty\}=\frac13>0.\]
Therefore, the natural density of $N$ is positive.

Thus, the infinite product does not converge absolutely for $|u|\geq\frac1a$.
\end{proof}

\subsection{Proof of the main theorem}\label{subsection: proof of main thm}

Now, we prove Theorem \ref{result: AEP for Noetherian} using Lemma \ref{lem: core of AEP-formula}, Theorem \ref{thm: counting-dim}, and Lemma \ref{lem: core of AEP-convergence}.

\begin{proof}[Proof of Theorem \ref{result: AEP for Noetherian}]
At first, we derive the infinite product representation. By Theorem \ref{thm: property of AZF for Noether}, we have
\begin{align*}
\zeta_{\mcX/\fun}(s)&=\prod_{x\in\Ms{X}}\frac1{\left(1-\frac1s\right)^{\otimes r(x)}}=\prod_{x\in\Ms{X}}\prod_{j=0}^{r(x)}(s-r(x)+j)^{(-1)^{j+1}\binom{r(x)}{j}}\\
&= \prod_{x\in\Ms{X}}\prod_{j=0}^{r(x)} s^{(-1)^{j+1}\binom{r(x)}{j}} \left(1-\frac{r(x)-j}{s}\right)^{(-1)^{j+1}\binom{r(x)}{j}}\\
&= \left(\frac1s\right)^{\sum_{x\in\Ms{X}}\sum_{j=0}^{r(x)}(-1)^{j}\binom{r(x)}{j}} \prod_{x\in\Ms{X}}\prod_{j=0}^{r(x)} \left(1-\frac{r(x)-j}{s}\right)^{(-1)^{j+1}\binom{r(x)}{j}}.
\end{align*}
Since the counting function $N_\mcX(t)$ of $\mcX$ satisfies 
\[N_\mcX(t)=\sum_{x\in\Ms{X}}(t-1)^{r(x)}\in\mathbb{Z}[t]\]
by Remark \ref{rem: counting function of Noet F1}, we have
\[\sum_{x\in\Ms{X}}\sum_{j=0}^{r(x)}(-1)^{j}\binom{r(x)}{j}=\sum_{x\in\Ms{X}}(1-1)^{r(x)} =N_\mcX(1).\]
By Lemma \ref{lem: core of AEP-formula},
\[1-\frac{r(x)-j}{s}=\prod_{n=1}^\infty \left(1-\left(\frac1s\right)^n\right)^{\kappa_{r(x)-j}(n)}.\]
Hence,
\begin{align*}
\zeta_{\mcX/\fun}(s)
&= \left(\frac1s\right)^{N_\mcX(1)} \prod_{x\in\Ms{X}}\prod_{j=0}^{r(x)} \left(1-\frac{r(x)-j}{s}\right)^{(-1)^{j+1}\binom{r(x)}{j}}\\
&= \left(\frac1s\right)^{N_\mcX(1)} \prod_{x\in\Ms{X}}\prod_{j=0}^{r(x)} \left(\prod_{n=1}^\infty \left(1-\left(\frac1s\right)^n\right)^{\kappa_{r(x)-j}(n)}\right)^{(-1)^{j+1}\binom{r(x)}{j}}\\
&= \left(\frac1s\right)^{N_\mcX(1)} \prod_{n=1}^\infty \left(1-\left(\frac1s\right)^n\right)^{-\sum_{x\in\Ms{X}}\sum_{j=0}^{r(x)}(-1)^{j}\binom{r(x)}{j}\kappa_{r(x)-j}(n)}.
\end{align*}
We put
\[\kappa(n,\Ms{X}):=\sum_{x\in\Ms{X}}\sum_{j=0}^{r(x)}(-1)^j\binom{r(x)}{j}\kappa_{r(x)-j}(n)=\sum_{x\in\Ms{X}}\sum_{j=0}^{r(x)}(-1)^{r(x)-j}\binom{r(x)}{j}\kappa_j(n),\]
then we get the desired infinite product
\[\zeta_{\mcX/\fun}(s)=\left(\frac1s\right)^{N_\mcX(1)} \prod_{n=1}^\infty \left(1-\left(\frac1s\right)^n\right)^{-\kappa(n,\Ms{X})}.\]

Next, we show that the infinite product converges absolutely for $|s|>\reldim\Zs{X}/\mathbb{Z}$ if $\Zs{X}$ is of finite type over $\mathbb{Z}$.
In the calculation of the infinite product representation above, the point which is relevant to its convergence area is
\[1-\frac{r(x)-j}{s}=\prod_{n=1}^\infty \left(1-\left(\frac1s\right)^n\right)^{\kappa_{r(x)-j}(n)},\]
where we use Lemma \ref{lem: core of AEP-formula}. 
Let $|s|>\reldim\Zs{X}/\mathbb{Z}$. Since $\Zs{X}$ is of finite type over $\mathbb{Z}$, we have $\deg N_\mcX=\reldim\Zs{X}/\mathbb{Z}$ by Theorem \ref{thm: counting-dim}. Since
\[|s|>\reldim\Zs{X}/\mathbb{Z}=\deg N_\mcX=\max_{x\in\Ms{X}}r(x)\geq r(x)\geq r(x)-j\]
for any $x\in\Ms{X}$, we have $\frac1{|s|}<\frac1{r(x)-j}$ when $0\leq j<r(x)$. Therefore, when $j\ne r(x)$, 
\begin{equation}\label{eq: core-formula-conv}
\prod_{n=1}^\infty \left(1-\left(\frac1s\right)^n\right)^{\kappa_{r(x)-j}(n)}
\end{equation}
converges absolutely by Lemma \ref{lem: core of AEP-convergence}. Also, when $j=r(x)$, it converges absolutely since $\kappa_0(n)=0$. Thus, the infinite product
\begin{equation}\label{eq: AEP}
\prod_{n=1}^\infty \left(1-\left(\frac1s\right)^n\right)^{-\kappa(n,\Ms{X})}
\end{equation}
converges absolutely for $|s|>\reldim\Zs{X}/\mathbb{Z}$ if $\Zs{X}$ is of finite type over $\mathbb{Z}$.

Lastly, we show that the infinite product (\ref{eq: AEP}) diverges for $|s|\leq\reldim\Zs{X}/\mathbb{Z}$ if $\Zs{X}$ is of finite type over $\mathbb{Z}$. By Theorem \ref{thm: counting-dim}, we have $|s|\leq\max_{x\in\Ms{X}}r(x)$. When $x\in\Ms{X}$ and $j$ satisfy $r(x)-j=\max_{x\in\Ms{X}}r(x)$, the infinite product (\ref{eq: core-formula-conv}) diverges by Lemma \ref{lem: core of AEP-convergence}. Thus, the infinite product (\ref{eq: AEP}) diverges for $|s|\leq\reldim\Zs{X}/\mathbb{Z}$.
\end{proof}

%
%
%
%
%
\section{Application of the main theorem}\label{section: examples}
%
%
%
%
%

In this section, we apply our main theorem (Theorem \ref{result: AEP for Noetherian}) and Corollary \ref{cor: AEP-linear Mobius transform} to the cases of $\mathbb{A}^r$, $\mathbb{G}_m^r$ and toric varieties to obtain the absolute Euler products of the absolute zeta functions for the $\fun$-schemes in these cases. In fact, for the cases of $\mathbb{A}^r$ and $\mathbb{G}_m^r$, our result coincides with that of Kurokawa~\cite{kurokawa2016azf-eng}*{Exercise 7.2} which was calculated by a different method.

\subsection{Fundamental $\fun$-schemes}

\begin{eg}\label{eg: AEP for A^r}
Let $r\in\mathbb{N}$ and let $\fun[t_1,\ldots,t_r]:=\{0\}\cup\{t_1^{u_1}\cdots t_r^{u_r}\mid u_i\in\mathbb{N}_0\}$ be a monoid. Let $\textbf{A}^r:=\MSpec\fun[t_1,\ldots,t_r]$ be a monoid scheme.
Then, by the extension of the functors $\underline{\textbf{A}^r}$ and $\underline{\mathbb{A}^r}$, we obtain the functor $\mathcal{A}^r\colon\mr\to\Set$ satisfying that the geometric realisation of $\mathcal{A}^r|_{\Mo}$ (resp. $\mathcal{A}^r|_{\CR}$) is $\textbf{A}^r$ (resp. $\mathbb{A}^r$).
For the detail of the extension of functors, see \cite{CC2010}*{\S 4.2}. Moreover, $\mathcal{A}^r$ is a torsion free Noetherian $\fun$-scheme.

Since $\#\mathbb{A}^r(\mathbb{F}_{p^m})=p^{mr}$ for any $m\in\mathbb{N}$ and prime $p$, the counting function of $\mathcal{A}^r$ is $N_{\mathcal{A}^r}(t)=t^r\in\mathbb{Z}[t]$.
Hence, we have $\abschi(\mathbb{A}^r):=N_{\mathcal{A}^r}(1)=1$ and
\[\zeta_{\mathcal{A}^r/\fun}(s)=\abszeta{\mathbb{A}^r}(s)=\frac1{s-r}.\]
Since the prime ideals of $\fun[t_1,\ldots,t_r]$ are of the form
\[\mathfrak{p}_I=\bigcup_{i\in I} t_i\fun[t_1,\ldots,t_r],\]
where $I\subset\{1,\ldots,r\}$ and $\mathfrak{p}_\emptyset=(0)$, we have $r(\mathfrak{p}_I)=r-\#I$ for $\mathfrak{p}_I\in\textbf{A}^r$. We put
\begin{align*}
\kappa(n,\textbf{A}^r)&:=\sum_{\mathfrak{p}_I\in\textbf{A}^r}\sum_{j=0}^{r(\mathfrak{p}_I)}(-1)^{r(\mathfrak{p}_I)-j}\binom{r(\mathfrak{p}_I)}j\kappa_j(n)
=\sum_{i=0}^r\binom{r}{i}\sum_{j=0}^{r-i}(-1)^{r-i-j}\binom{r-i}j\kappa_j(n)\\
&=\sum_{j=0}^r \kappa_j(n)\sum_{i=0}^{r-j}(-1)^{r-j-i}\binom{r}{i}\binom{r-i}j
=\sum_{j=0}^r \kappa_j(n)\sum_{i=0}^{r-j}(-1)^{r-j-i}\binom{r}{j}\binom{r-j}i\\
&=\sum_{j=0}^r \binom{r}{j}\kappa_j(n)\sum_{i=0}^{r-j}(-1)^{r-j-i}\binom{r-j}i
=\sum_{j=0}^r \binom{r}{j}\kappa_j(n)(1-1)^{r-j}\\
&=\kappa_r(n).
\end{align*}
By Theorem \ref{result: AEP for Noetherian}, we obtain the absolute Euler product
\[\zeta_{\mathcal{A}^r/\fun}(s)=\frac1s\prod_{n=1}^\infty \left(1-\left(\frac1s\right)^n\right)^{-\kappa_r(n)}\]
and this infinite product converges absolutely for $|s|>\reldim\mathbb{A}^r/\mathbb{Z}=r$.
\end{eg}

\begin{eg}\label{eg: AEP for Gm}
Let $r\in\mathbb{N}$ and let $\fun[t_1^{\pm1},\ldots,t_r^{\pm1}]:=\{0\}\cup\{t_1^{u_1}\cdots t_r^{u_r}\mid u_i\in\mathbb{Z}\}$ be a monoid. Let $\textbf{G}_m^r:=\MSpec\fun[t_1^{\pm1},\ldots,t_r^{\pm1}]$ be a monoid scheme.
Then, by the extension of the functors $\underline{\textbf{G}_m^r}$ and $\underline{\mathbb{G}_m^r}$, we obtain the functor $\mathcal{G}_m^r\colon\mr\to\Set$ satisfying that the geometric realisation of $\mathcal{G}_m^r|_{\Mo}$ (resp. $\mathcal{G}_m^r|_{\CR}$) is $\textbf{G}_m^r$ (resp. $\mathbb{G}_m^r$). Moreover, $\mathcal{A}^r$ is a torsion free Noetherian $\fun$-scheme.

Since $\#\mathbb{G}_m^r(\mathbb{F}_{p^m})=(p^m-1)^r$ for any $m\in\mathbb{N}$ and prime $p$, the counting function of $\mathcal{G}_m^r$ is $N_{\mathcal{G}_m^r}(t)=(t-1)^r\in\mathbb{Z}[t]$.
Hence, we have $\abschi(\mathbb{G}_m^r):=N_{\mathcal{G}_m^r}(1)=0$ and
\[\zeta_{\mathcal{G}_m^r/\fun}(s)=\abszeta{\mathbb{G}_m^r}(s)=\prod_{k=0}^r (s-k)^{(-1)^{r-k+1}\binom rk}.\]
Since $\textbf{G}_m^r=\{(0)\}$ and $r((0))=r$, we put
\[\kappa(n,\textbf{G}_m^r):=\sum_{k=0}^r (-1)^{r-k}\binom rk\kappa_k(n).\]
By Theorem \ref{result: AEP for Noetherian}, we get the absolute Euler product
\[\zeta_{\mathcal{G}_m^r/\fun}(s)=\prod_{n=1}^\infty \left(1-\left(\frac1s\right)^n\right)^{-\sum\limits_{k=0}^r (-1)^{r-k}\binom rk\kappa_k(n)}\]
and this infinite product converges absolutely for $|s|>\reldim\mathbb{G}_m^r/\mathbb{Z}=r$.
\end{eg}

\subsection{Toric varieties}

Lastly, we calculate the absolute Euler products of the absolute zeta functions for the $\fun$-schemes associated with toric varieties, using their counting functions calculated by Deitmar~\cite{deitmar2008f1}.

At first, we review the notation of cones and fans~\cite{f1-land2009}.
Let $N$ be a \textit{lattice}, i.e. $N\cong\mathbb{Z}^r$, and let $N_\mathbb{R}:=N\otimes\mathbb{R}\cong\mathbb{R}^r$.
$\sigma\subset\mathbb{R}^r$ is called a \textit{polyhedral cone} if $v_1,\ldots,v_k\in N_\mathbb{R}$ such that $\sigma=v_1\mathbb{R}_{\geq 0}+\cdots+v_k\mathbb{R}_{\geq 0}$. In particular, if we can take linearly independent generators $v_1,\ldots,v_k$, $\sigma$ is called a \textit{simplicial cone}. A cone $\sigma$ is \textit{rational} if its generators lie in $N$. A cone $\sigma$ is \textit{strongly convex} if $\sigma\cap (-\sigma)=\{0\}$. We define the \textit{dimension} of $\sigma$ by $\dim\sigma:=\dim_\mathbb{R}(\sigma+(-\sigma))$. In this article, we call a strongly convex rational simplicial cone simply a \textit{cone}.

Let $N^\vee:=\Hom_\mathbb{Z}(N,\mathbb{Z})\cong\mathbb{Z}^r$ be its dual, and let $\langle\cdot,\cdot\rangle\colon N^\vee\times N\to\mathbb{Z}$ be the natural pairing. We define the dual of a cone $\sigma$ by $\sigma^\vee:=\{u\in N^\vee\otimes\mathbb{R}\mid \forall v\in\sigma,\ \langle u,v\rangle\geq 0\}$. $\tau$ is a \textit{face} of $\sigma$ if there exists $u\in\sigma^\vee$ satisfying $\tau=u^\perp\cap\sigma=\{w\in\sigma\mid\langle u,w\rangle=0\}$.
We denote a face $\tau$ of $\sigma$ by $\tau\prec\sigma$.
Let $\Phi$ be a nonempty set of cones in $N_\mathbb{R}$. $\Phi$ is a \textit{fan}, if every face of a cone in $\Phi$ is also in $\Phi$ and $\sigma_1\cap\sigma_2$ is a face of each $\sigma_1$ and $\sigma_2$ for any $\sigma_1, \sigma_2\in\Phi$.
Also, we call $\Phi$ to be \textit{finite} if $|\Phi|<\infty$.

Next, we review the definition of the $\fun$-scheme associated with a cone~\cite{f1-land2009}*{\S2.1}. Let $\sigma$ be a cone and $N$ be a lattice. Let $\sigma^\vee$ (resp. $N^\vee$) be the dual of $\sigma$ (resp. $N$) and $A_\sigma:=\sigma^\vee\cap N^\vee$ be a monoid. Then, we have the arithmetic scheme $\Spec\mathbb{Z}[A_\sigma]$ and the monoid scheme $\MSpec A_\sigma$. The $\fun$-scheme $\mcX_\sigma$ associated with the cone $\sigma$ is defined by the $\fun$-scheme satisfying that the geometric realisation of $\mcX_\sigma |_{\Mo}$ is $X_\sigma^{\Mo}:=\MSpec A_\sigma$ and that of $\mcX_\sigma|_\CR$ is $X_\sigma^\mathbb{Z}:=\Spec \mathbb{Z}[A_\sigma]$. Since $A_\sigma$ is finitely generated, $\mcX_\sigma$ is Noetherian.

Lastly, we review the definition of the $\fun$-scheme associated with a fan~\cite{f1-land2009}*{\S2.1}. Let $\Phi$ be a finite fan in $N_{\mathbb{R}}$. 
An inclusion $\tau\subset\sigma$ of cones induces an open immersions $\MSpec A_\tau\hookrightarrow\MSpec A_\sigma$ and $\Spec\mathbb{Z}[A_\tau]\hookrightarrow\Spec\mathbb{Z}[A_\sigma]$.
We define the monoid scheme and the arithmetic scheme associated with the fan $\Phi$ by
\[X_\Phi^{\Mo}:=\ilim{\sigma\in\Phi}\MSpec A_\sigma,\quad X_\Phi^{\mathbb{Z}}:=\ilim{\sigma\in\Phi}\Spec \mathbb{Z}[A_\sigma],\]
and we call $(X_\Phi^{\mathbb{Z}},\Phi)$ the \textit{toric variety} of the fan $\Phi$ of dimension $r$. The $\fun$-scheme $\mcX_\Phi$ associated with $\Phi$ is the $\fun$-scheme satisfying that the geometric realisation of $\mcX_\Phi |_{\Mo}$ is $X_\Phi^{\Mo}$ and that of $\mcX_\Phi |_\CR$ is $X_\Phi^{\mathbb{Z}}$. Since $\Phi$ is finite and $\MSpec A_\sigma$ is Noetherian, then $\mcX_\Phi$ is Noetherian.

Deitmar calculated the counting functions of the arithmetic schemes associated with cones and finite fans, which are equal to the counting functions of the $\fun$-scheme associated with them.

\begin{prop}[{\cite{deitmar2008f1}*{Proposition 4.3}}]\label{prop: counting function for cone/fan}
Let $N$ be a lattice of dimension $r$, and let $\Phi$ be a finite fan in $N_{\mathbb{R}}$. For any $\sigma\in\Phi$, let $\mcX_\sigma$ (resp. $\mcX_\Phi$) be the $\fun$-scheme associated with $\sigma$ (resp. $\Phi$). Then, the counting functions $N_{\mcX_\sigma}(t)$ and $N_{\mcX_\Phi}(t)$ are
\begin{align*}
N_{\mcX_\sigma}(t)&=\sum_{k=0}^{\dim\sigma} \#I_{\dim\sigma-k}^\sigma (t-1)^k = \sum_{j=0}^{\dim\sigma} \left(\sum_{k=j}^{\dim\sigma} (-1)^{k-j} \binom kj \#I_{\dim\sigma-k}^\sigma\right) t^j,\\
N_{\mcX_\Phi}(t)&=\sum_{k=0}^r \#I_{r-k}(t-1)^k = \sum_{j=0}^r \left(\sum_{k=j}^r (-1)^{k-j}\binom kj \#I_{r-k}\right) t^j,
\end{align*}
where $I_k^\sigma:=\{\eta\prec\sigma \mid \dim\eta=k\}$ and $I_k:=\{\sigma\in\Phi \mid \dim\sigma=k\}$.
\end{prop}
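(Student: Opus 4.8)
The plan is to reduce both identities to Remark \ref{rem: counting function of Noet F1}, which already writes the counting function of a torsion free Noetherian $\fun$-scheme as $N(t)=\sum_{x}(t-1)^{r(x)}$, the sum being over the points of the monoid realisation and $r(x)=\rank\sts{\cdot,x}^\times$. Thus the real work is to enumerate the points of $X_\sigma^{\Mo}=\MSpec A_\sigma$ (resp. of $X_\Phi^{\Mo}$) and to compute $r(x)$ at each of them; once this is done, the passage from $\sum_x(t-1)^{r(x)}$ to the explicit polynomial --- the second equality on each line --- is exactly the binomial re-summation carried out in the proof of Corollary \ref{cor: CF of torsion-free Noet F1}, i.e. expanding $(t-1)^k=\sum_j(-1)^{k-j}\binom kj t^j$ and interchanging the order of summation.

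For a single cone $\sigma$ I would first recall the toric dictionary at the monoid level: the prime ideals of $A_\sigma=\sigma^\vee\cap N^\vee$ are in inclusion-reversing bijection with the faces $\eta\prec\sigma$, the prime $\mathfrak p_\eta$ being the complement in $A_\sigma$ of the face submonoid $A_\sigma\cap\eta^\perp$; this identifies the underlying set of $\MSpec A_\sigma$ with the set of faces of $\sigma$. Next I would compute the stalk at the point $x_\eta$: it is the localisation of $A_\sigma$ at $A_\sigma\setminus\mathfrak p_\eta=\sigma^\vee\cap\eta^\perp\cap N^\vee$, so its unit group is the subgroup of $N^\vee$ generated by the lattice points of the face $\eta^{*}:=\sigma^\vee\cap\eta^\perp$ of $\sigma^\vee$ dual to $\eta$. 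This is a finitely generated subgroup of $N^\vee$, hence free, and by the order-reversing face correspondence between $\sigma$ and $\sigma^\vee$ one has $\dim\eta^{*}=\dim N-\dim\eta$; taking $N$ of rank $\dim\sigma$ as in Deitmar's normalisation (the general case differing only by an overall factor $(t-1)^{\dim N-\dim\sigma}$ coming from the splitting $X_\sigma\cong X_{\bar\sigma}\times\mathbb G_m^{\,\dim N-\dim\sigma}$), this gives $r(x_\eta)=\dim\sigma-\dim\eta$. Feeding this into Remark \ref{rem: counting function of Noet F1} yields $N_{\mcX_\sigma}(t)=\sum_{\eta\prec\sigma}(t-1)^{\dim\sigma-\dim\eta}=\sum_k\#I^\sigma_{\dim\sigma-k}(t-1)^k$, and the binomial step finishes the cone case.

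For a finite fan $\Phi$ I would use that $X_\Phi^{\Mo}=\ilim{\sigma\in\Phi}\MSpec A_\sigma$ is glued from the affine charts along the open immersions $\MSpec A_\tau\hookrightarrow\MSpec A_\sigma$ attached to $\tau\prec\sigma$. The combinatorial heart is that each point of $X_\Phi^{\Mo}$ lies in a unique smallest chart $\MSpec A_\eta$, in which it is the closed point $A_\eta\setminus A_\eta^\times$; this yields a bijection between the points of $X_\Phi^{\Mo}$ and the cones $\eta\in\Phi$. At the point attached to $\eta$ the stalk's unit group is $A_\eta^\times=\eta^\vee\cap(-\eta^\vee)\cap N^\vee=(\mathrm{span}\,\eta)^\perp\cap N^\vee$, free of rank $r-\dim\eta$ with $r=\dim N$. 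Hence $N_{\mcX_\Phi}(t)=\sum_{\eta\in\Phi}(t-1)^{r-\dim\eta}=\sum_k\#I_{r-k}(t-1)^k$, and once more the binomial expansion gives the polynomial form; the cone case is the special case where $\Phi$ is the fan of all faces of $\sigma$.

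I expect the main obstacle to be organising these two toric dictionaries cleanly --- identifying the points of $\MSpec A_\sigma$, then of the glued scheme $X_\Phi^{\Mo}$, with faces resp. cones, and pinning down the rank of each unit group --- since this is precisely where one must keep track of which lattice the computation takes place in and why the exponents come out as stated. Everything past that point is the routine re-summation already used for Corollary \ref{cor: CF of torsion-free Noet F1}. In practice, as this proposition is quoted from \cite{deitmar2008f1}, the argument reduces to recalling Deitmar's count of the $\mathbb F_q$-points of $X_\sigma^{\mathbb Z}$ and $X_\Phi^{\mathbb Z}$ and observing that it matches the intrinsic expression $\sum_x(t-1)^{r(x)}$ furnished by Remark \ref{rem: counting function of Noet F1}.
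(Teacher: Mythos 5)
Your proof is correct and takes essentially the same route as the paper's: enumerate the points of $\MSpec A_\sigma$ (resp.\ of the glued monoid scheme $X_\Phi^{\Mo}$) via the face/cone dictionary, compute $r(x)=\rank\sts{\cdot,x}^\times$ at each point from the lattice points of the corresponding dual face, feed this into Remark \ref{rem: counting function of Noet F1}, and finish with the binomial re-summation. If anything you are slightly more careful than the paper's (omitted) argument in flagging that the cone formula, as stated with $\dim\sigma$ rather than $\dim N$, presupposes $\sigma$ full-dimensional or Deitmar's normalization $\rank N=\dim\sigma$.
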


By this proposition, we have the absolute Euler product of the absolute zeta functions for the $\fun$-schemes associated with cones and finite fans.
\begin{eg}
Let $N$ be a lattice of dimension $r$, and let $\Phi$ be a finite fan in $N_{\mathbb{R}}$. For any $\sigma\in\Phi$, let $\mcX_\sigma$ (resp. $\mcX_\Phi$) be the $\fun$-scheme associated with $\sigma$ (resp. $\Phi$).
By Proposition \ref{prop: counting function for cone/fan}, the absolute zeta functions of $\mcX_\sigma$ and $\mcX_\Phi$ are
\begin{align*}
\zeta_{\mcX_\sigma/\fun}(s)&=\prod_{j=0}^{\dim\sigma}(s-j)^{\sum\limits_{k=j}^{\dim\sigma} (-1)^{k-j} \binom kj \#I_{\dim\sigma-k}^\sigma},\\
\zeta_{\mcX_\Phi/\fun}(s)&=\prod_{j=0}^r(s-j)^{\sum\limits_{k=j}^r (-1)^{k-j}\binom kj \#I_{r-k}},
\end{align*}
where $I_k^\sigma:=\{\eta\prec\sigma \mid \dim\eta=k\}$ and $I_k:=\{\sigma\in\Phi \mid \dim\sigma=k\}$. Using the homomorphism $M_n$ (Corollary \ref{cor: AEP-linear Mobius transform}), we can easily obtain the absolute Euler products of those absolute zeta functions. Since $N_{\mcX_\sigma}(1)=1$ and $N_{\mcX_\Phi}(1)=\#I_{r}$ by Proposition \ref{prop: counting function for cone/fan}, we have the absolute Euler products
\begin{align*}
\zeta_{\mcX_\sigma/\fun}(s)&=\frac1s\prod_{n=1}^\infty \left(1-\left(\frac1s\right)^n\right)^{-\sum\limits_{j=0}^{\dim\sigma} \left(\sum\limits_{k=j}^{\dim\sigma} (-1)^{k-j} \binom kj \#I_{\dim\sigma-k}^\sigma\right) \kappa_j(n)},\\
\zeta_{\mcX_\Phi/\fun}(s)&=\left(\frac1s\right)^{\#I_{r}}\prod_{n=1}^\infty \left(1-\left(\frac1s\right)^n\right)^{-\sum\limits_{j=0}^r \left(\sum\limits_{k=j}^r (-1)^{k-j}\binom kj \#I_{r-k}\right) \kappa_j(n)},
\end{align*}
and $\zeta_{\mcX_\sigma/\fun}(s)$ (resp. $\zeta_{\mcX_\Phi/\fun}(s)$) converges absolutely for $|s|>\dim\sigma$ (resp. $|s|>r$).
\end{eg}

\appendix

\subsection*{Acknowledgement}

The author thanks his advisor Kenichi Bannai for checking the draft of this article and giving many helpful comments. The author would like to thank Kazuki Yamada, Hohto Bekki, and Yoshinori Kanamura for reading it and giving many comments, and Tatsuya Oshita especially for providing the idea on another proof that $\kappa_a(n)\in\mathbb{Z}$ in Lemma \ref{lem: core of AEP-formula}. The author is also grateful to Yoshinosuke Hirakawa for pointing out errors in the arguments of the manuscript and giving many helpful comments and suggestions on the manuscript many times.
Lastly, the author thanks the referee for checking the article and giving valuable comments.

%
%
%
%
%
%
%
%
%
%

\bibliography{ref}

\end{document}